\newtheorem{thm}{Theorem}[section]
\newtheorem{prop}[thm]{Proposition}
\newtheorem{lemma}[thm]{Lemma}
\newtheorem{exam}[thm]{Example}
\theoremstyle{remark}
\newtheorem{remark}[thm]{Remark}
\newcommand{\id}{{\rm{id}}}
\newcommand{\Ad}{{\rm{Ad}}}
\newcommand{\Hom}{{\rm{Hom}}}
\newcommand{\BN}{\mathbf N}
\newcommand{\BC}{\mathbf C}
\newcommand{\BB}{\mathbf B}
\newcommand{\Aut}{{\rm{Aut}}}
\newcommand{\CalE}{{\mathcal{E}}}
\newtheorem{Def}{Definition}[section]
\title{Free coactions of a finite dimensional $C^*$-Hopf algebra and strong Morita equivalence}
\author{Kazunori Kodaka and Tamotsu Teruya}
\address{Department of Mathematical Sciences, Faculty of Science, Ryukyu
\endgraf
University, Nishihara-cho, Okinawa, 903-0213, Japan}
\address{Faculty of Education, Gunma University, 4-2 Aramaki-machi, Maebashi City,
\endgraf
Gunma, 371-8510, Japan}
\address{\sl{E-mail address}: \rm{kodaka@math.u-ryukyu.ac.jp}}
\address{\sl{E-mail address}: \rm{teruya@gunma-u.ac.jp}}
\keywords{conditional expectations, finite dimensional $C^*$-Hopf algebras, free coactions,
strong Morita equivalence}
\subjclass[2010]{46L05}
\begin{document}

\begin{abstract}
We shall introduce a notion of free coactions of a finite dimensional $C^*$-Hopf algebra on a $C^*$-algebra
modifying a notion of free actions of a discrete group on a $C^*$-algebra and we shall study several properties
on coactions of a finite dimensional $C^*$-Hopf algebra on $C^*$-algebras, which are relating to strong
Morita equivalence for inclusions of $C^*$-algebras.
\end{abstract}
\maketitle

\section{Introduction}\label{sec:intro} We shall introduce a notion of free coactions of a finite dimensional
$C^*$-Hopf algebra on a $C^*$-algebra modifying a notion of free actions of a discrete group on a
$C^*$-algebra, which is defined in Zarikian \cite {Zarikian:expectation} and we shall give a result similar
to \cite [Theorem 3.1.2]{Zarikian:expectation}. Also, we discuss the relations of the Rokhlin property,
the freeness, outerness and saturatedness of coactions of a finite dimensional $C^*$-Hopf algebra on
a $C^*$-algebra. Furthermore, we show that strong Morita equivalence for coactions preserves the
freeness of coactions of a finite dimensional $C^*$-Hopf algebra on a $C^*$-algebra using the result
similar to \cite [Theorem 3.1.2]{Zarikian:expectation}.
\par
For an algebra $A$, we denote by $1_A$ and $\id_A$ the unit element in $A$ and the identity map
on $A$, respectively. If no confusion arises, we denote them by $1$ and $\id$, respectively.
Also, we denote by $M(A)$ the multiplier $C^*$-algebra of $A$.
\par
Let $\pi$ be a homomorphism of $A$
to a $C^*$-algebra $B$ with $\overline{\pi(A)B}=B$. Then there is the unique strictly continuous
homomorphism of $M(A)$ to $M(B)$ extending $\pi$ to $M(A)$ by Jensen and Thomsen
\cite [Corollary 1.1.15]{JT:KK}. We denote it by $\underline{\pi}$.
\par
For each $n\in\BN$, we denote
by $M_n (\BC)$ the $n\times n$-matrix algebra over $\BC$ and $I_n$ denotes the unit element
in $M_n (\BC)$.
\par
Let $X$ be an $A-B$-equivalence bimodule. For any $a\in A$, $b\in B$, $x\in X$,
we denote by $a\cdot x$ the left $A$-action on $X$ and by $x\cdot b$ the right $B$-action on $X$,
respectively.
Let $\widetilde{X}$ be the dual $B-A$-equivalence bimodule of $X$ and $\widetilde{x}$ denotes
the element in $\widetilde{X}$ induced by an element $x\in X$. Also, we regard $X$ as a Hilbert
$M(A)-M(B)$-equivalence bimodule in the sense of Brown, Mingo and Shen \cite {BMS:quasi} as follows:
Let $\BB_B (X)$ be the $C^*$-algebra of all adjointable right $B$-linear operators on $X$.
We note that a right $B$-linear operator on $X$ is bounded. Then $\BB_B (X)$ can be identified with
$M(A)$. Similarly let ${}_{A} \BB (X)$ be the $C^*$-algebra of all adjointable left $A$-linear operators on
$X$ and ${}_A \BB (X)$ is identified with $M(B)$. In this way, we regard $X$ as a Hilbert
$M(A)-M(B)$-bimodule. Let $\Aut (X)$ be the group of all bijective linear maps on $X$.
 
\section{Preliminaries}\label{sec:pre} Let $H$ be a finite dimensional $C^*$-Hopf algebra. We denote
its comultiplication, counit and antipode by $\Delta$, $\epsilon$ and $S$, respectively.
We shall use Sweedler's notation $\Delta(h)=h_{(1)}\otimes h_{(2)}$ for any $h\in H$, which suppresses
a possible summation when we write comultipications. We denote by $N$ the dimension of $H$.
Let $H^0$ be the dual $C^*$-Hopf algebra of $H$. We denote its comultiplication, counit and antipode
by $\Delta^0$, $\epsilon^0$ and $S^0$, respectively. There is a distinguished projection $e$ in $H$.
We note that $e$ is the Haar trace on $H^0$. Also, there is a distinguished projection $\tau$ in $H^0$
which is the Haar trace on $H$. Since $H$ is finite dimensional, $H\cong\oplus_{k=1}^L M_{f_k}(\BC)$
and $H^0 \cong\oplus_{k=1}^K M_{d_k}(\BC)$ as $C^*$-algebras. Let
$\{v_{ij}^k \, | \, k=1,2,\dots, L, \, i, j=1, 2,\dots, f_k \}$ be a system of matrix units of $H$.
Let $\{w_{ij}^k \, | \, k=1, 2, \dots, K, \, i, j=1, 2,\dots, d_k \}$ be a basis of $H$ satisfying
Szyma\'nski and Peligrad's \cite [Theorem 2.2,2]{SP:saturated}, which is called a system of
\sl
comatrix units
\rm
of $H$, that is, the dual basis of a system of matrix units of $H^0$.
Also, let $\{\phi_{ij}^k \, | \, k=1, 2,\dots, K, \, i, j=1, 2, \dots, d_k \}$ and
$\{\omega_{ij}^k \, | \, k=1, 2, \dots, L, \, i, j=1, 2, \dots, f_k \}$ be systems of matrix units and comatrix
units of $H^0$, respectively. Let $A$ be a $C^*$-algebra.

\begin{Def}\label{pre1} By a coaction of $H^0$ on $A$ we mean a homomorphism $\rho$
of $A$ to $A\otimes H^0$ satisfying the following conditions:
\newline
(1) $\overline{\rho(A)(A\otimes H^0)}=A\otimes H^0$,
\newline
(2) $(\id\otimes\epsilon^0 )(\rho(a))=a$ for any $a\in A$,
\newline
(3) $(\rho\otimes\id)\circ\rho=(\id\otimes\Delta^0 )\circ\rho$.
\end{Def}

Let $\Hom(H, A)$ be the linear space of all linear maps from $H$ to $A$.
Since $H$ is finite dimensional, $\Hom(H, A)$ is isomorphic to $A\otimes H^0$.  We
identify $\Hom(H, A)$ with $A\otimes H^0$. For any element $x\in A\otimes H^0$, we denote
by $\widehat{x}$ the element in $\Hom(H, A)$ induced by $x$.
\par
For a coaction $\rho$ of $H^0$ on a $C^*$-algebra $A$, we consider the action of $H$ on $A$ defined
by
$$
h\cdot_{\rho}a=\widehat{\rho(a)}(h)=(\id\otimes h)(\rho(a))
$$
for any $a\in A$, $h\in H$. We call it the action of $H$ on $A$ induced by $\rho$. Let $A\rtimes_{\rho}H$
be the crossed product of $A$ by the action of $H$ on $A$ induced by $\rho$. Let $a\rtimes_{\rho}h$
be the element in $A\rtimes_{\rho}H$ induced by elements $a\in A$, $h\in H$. Let $\widehat{\rho}$ be the
dual coaction of $H$ on $A\rtimes_{\rho}H$ defined by
$$
\widehat{\rho}(a\rtimes_{\rho}h)=(a\rtimes_{\rho}h_{(1)})\otimes h_{(2)}
$$
for any $a\in A$, $h\in H$. Let $E_1^{\rho}$ be the canonical conditional expectation from $A\rtimes_{\rho}H$
onto $A$ defined by
$$
E_1^{\rho}(a\rtimes_{\rho}h)=\tau(h)a
$$ 
for any $a\in A$, $h\in H$.

\section{Coactions of a finite dimensional $C^*$-Hopf algebra}\label{sec:coaction}
Let $H$ and $H^0$ be as in Section \ref {sec:pre}. Let $\rho$ be a coaction of $H^0$ on
a $C^*$-algebra $A$.

\begin{Def}\label{def:coaction1} We say that $\rho$ is
\sl
free
\rm
if $\rho$ satisfies the following:
If $x\in M(A)\otimes H^0$ satisfies that
$$
x\rho(a)=(a\otimes 1^0 )x
$$
for any $a\in A$, then
$$
x\in (A' \cap M(A))\otimes\BC\tau .
$$
\end{Def}

We show that the above property is an extension of the freeness of discrete groups in
\cite {Zarikian:expectation} to coactions of finite dimensional $C^*$-Hopf algebras.
\par
Let $G$ be a finite group and $C(G)$ the finite dimensional $C^*$-Hopf algebra of all $\BC$-
valued functions on $G$. We denote by $1^0$ the unit element in $C(G)$.
Let $\alpha$ be an action of $G$ on a $C^*$-algebra $A$. Let $\rho_{\alpha}$
be the coaction of $C(G)$ on $A$ induced by $\alpha$, that is,
$$
\rho_{\alpha}(a)=\sum_{t\in G}\alpha_t (a)\delta_t
$$
for any $a\in A$, where $\delta_t$ is the function on $G$ defined by
$$
\delta_t (s)= \begin{cases} 1 & \text{if $s=t$} \\
0 & \text{if $s\ne t$} \end{cases}
$$
We recall the definition that $\alpha$ is a free action of $G$ on a $C^*$-algebra $A$.

\begin{Def}\label{def:coaction2} We say that $\alpha$ is
\sl
free
\rm
if $\alpha_t$ is a free automorphism of $A$ for any $t\in G\setminus\{\imath\}$, that is,
if an element $x\in M(A)$ satisfies that $xa=\alpha_t (a)x$ for any $a\in A$, then $x=0$, where
$\imath$ is the unit element in $G$.
\end{Def}

\begin{prop}\label{prop} Let $\alpha$ be an action of a finite group $G$ on a $C^*$-algebra $A$
and $\rho_{\alpha}$ the coaction of $C(G)$ on $A$ induced by $\alpha$. Then the following are
equivalent:
\newline
$(1)$ The action $\alpha$ is free,
\newline
$(2)$ The coaction $\rho_{\alpha}$ is free.
\end{prop}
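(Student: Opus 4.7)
The plan is to translate the coaction-intertwining condition of Definition \ref{def:coaction1} into a family of elementwise conditions indexed by $G$ using the basis $\{\delta_t\}_{t\in G}$ of $C(G)$, and to observe that each such condition is exactly the freeness of one of the $\alpha_t$'s.

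First I would pin down the Haar trace. Here $H=\BC G$ and $H^0=C(G)$, and the Haar trace $\tau\in H^0$ on $H$ is the function $\delta_\imath\in C(G)$ that is $1$ at the identity and $0$ elsewhere; in particular $\BC\tau=\BC\delta_\imath$. Next I would write an arbitrary element of $M(A)\otimes C(G)$ uniquely as $x=\sum_{t\in G}x_t\otimes\delta_t$ with $x_t\in M(A)$, and compute
$$
x\rho_\alpha(a)=\sum_{t\in G}x_t\alpha_t(a)\otimes\delta_t,\qquad (a\otimes 1^0)x=\sum_{t\in G}ax_t\otimes\delta_t.
$$
Comparing coefficients, the condition $x\rho_\alpha(a)=(a\otimes 1^0)x$ for all $a\in A$ is equivalent to the family of identities $x_t\alpha_t(a)=ax_t$ for all $a\in A$ and $t\in G$, which after the substitution $a=\alpha_{t^{-1}}(b)$ becomes
$$
x_t b=\alpha_{t^{-1}}(b)x_t\qquad(b\in A,\ t\in G).
$$

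For (1)$\Rightarrow$(2), assume $\alpha$ is free. For each $t\neq\imath$ the automorphism $\alpha_{t^{-1}}$ is free, so by Definition \ref{def:coaction2} the displayed identity forces $x_t=0$. For $t=\imath$ it reduces to $x_\imath b=bx_\imath$, i.e.\ $x_\imath\in A'\cap M(A)$. Hence $x=x_\imath\otimes\delta_\imath\in(A'\cap M(A))\otimes\BC\tau$, which is exactly the freeness of $\rho_\alpha$.

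For (2)$\Rightarrow$(1), fix $t\in G\setminus\{\imath\}$ and $y\in M(A)$ with $ya=\alpha_t(a)y$ for every $a\in A$. Setting $x:=y\otimes\delta_{t^{-1}}$, the calculation above shows $x\rho_\alpha(a)=(a\otimes 1^0)x$ for all $a\in A$. Freeness of $\rho_\alpha$ then places $x$ in $(A'\cap M(A))\otimes\BC\delta_\imath$; but the $\delta_{t^{-1}}$-component of $x$ is $y$ and $t^{-1}\neq\imath$, forcing $y=0$. Thus each $\alpha_t$ with $t\neq\imath$ is free. There is no real obstacle here: the argument is essentially a bookkeeping exercise once the Haar trace is identified with $\delta_\imath$ and one is careful about the appearance of $\alpha_{t^{-1}}$ versus $\alpha_t$ caused by the side on which $\rho_\alpha(a)$ multiplies $x$.
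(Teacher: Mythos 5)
Your proof is correct and follows essentially the same route as the paper: expand $x=\sum_{t\in G}x_t\otimes\delta_t$, compare coefficients to get $x_t\alpha_t(a)=ax_t$, identify the Haar trace $\tau$ with $\delta_\imath$, and for the converse feed a suitably supported element back into the freeness of $\rho_\alpha$. The only (harmless) cosmetic differences are that you handle the $\alpha_t$ versus $\alpha_{t^{-1}}$ relabeling explicitly and use the single tensor $y\otimes\delta_{t^{-1}}$, where the paper uses $x\otimes\delta_t+1\otimes\delta_\imath$; both choices force the non-identity component to vanish.
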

\begin{proof} $(1)\Rightarrow(2)$: We suppose that $\alpha$ is free. We suppose that an element
$x\in M(A)\otimes C(G)$ satisfies that for any $a\in A$, $x\rho_{\alpha}(a)=(a\otimes 1^0 )x$. Then
$$
\sum_{t\in G}x(\alpha_t (a)\otimes\delta_t )=(a\otimes 1^0 )x .
$$
Since $x\in M(A)\otimes C(G)$, we can write that $x=\sum_{t\in G}x_t \otimes\delta_t$,
where $x_t \in M(A)$ for any $t\in G$. Thus
$$
\sum_{s, t\in G}x_s \alpha_t (a)\otimes \delta_s \delta_t =\sum_{s\in G}(a\otimes 1^0 )(x_s \otimes\delta_s ) .
$$
That is,
$$
\sum_{s\in G}x_s \alpha_s (a)\otimes\delta_s =\sum_{s\in G}(a\otimes 1^0 )(x_s \otimes\delta_s) .
$$
Hence
$$
x_s \alpha_s (a)=ax_s
$$
for any $a\in A$, $s\in G$. If $s\ne\imath$, $\alpha_s$ is a free automorphism of $A$. Thus
$x_s =0$ for any $s\in G\setminus\{\imath\}$. Therefore $x=x_{\imath}\otimes\delta_{\imath}$.
Also, $x_{\imath}\alpha_{\imath}(a)=ax_{\imath}$. That is, $x_{\imath}a=ax_{\imath}$. Hence
$x_{\imath}\in A' \cap M(A)$. Since $\delta_{\imath}$ is the distinguished projection in $C(G)$, we
see that $\rho_{\alpha}$ is free.
\newline
$(2)\Rightarrow (1)$: We suppose that $\rho_{\alpha}$ is free. Let $t\in G\setminus\{\imath\}$ and
we suppose that an element
$x\in M(A)$ satisfies that for any $a\in A$, $x\alpha_t (a)=ax$ for any $a\in A$.
For any $s\in G$, let $x_s$ be the element in $M(A)$ defined by
$$
x_s = \begin{cases} x & \text{if $s=t$} \\
1  &  \text{if $s=\imath$} \\
0 & \text{others}
\end{cases} .
$$
Let $y=\sum_{s\in G}x_s \otimes \delta_s$. Then $y\in M(A)\otimes C(G)$ and for any $a\in A$,
$$
y\rho_{\alpha}(a)=\sum_{s, r\in G}(x_s \otimes\delta_s )(\alpha_r (a)\otimes\delta_r )
=\sum_{s\in G}x_s \alpha_s (a)\otimes\delta_s =x\alpha_t (a)\otimes\delta_t +a\otimes\delta_{\imath} .
$$
Also, for any $a\in A$,
$$
(a\otimes 1^0 )y=\sum_{s\in G}ax_s \otimes\delta_s =x\otimes\delta_t +a\otimes\delta_{\imath} .
$$
Hence $y\rho_{\alpha}(a)=(a\otimes 1^0 )y$ for any $a\in A$. Since $\rho_{\alpha}$ is free,
$y\in (A' \cap M(A))\otimes\BC\delta_{\imath}$.
By the definition of $y$, $x=0$. Therefore $\alpha$ is free.
\end{proof}

\begin{remark}\label{remark:coaction4} As Schwieger and Wagner pointed out
in Introduction and Remark of \cite {SW:compact}, the freeness in this paper is different from theirs in
\cite {SW:compact}. In this paper, we use the notion ``saturated" instead of their ``free".
The definition of ``saturated" is given in \cite {SP:saturated}.
\end{remark}
 
\begin{prop}\label{prop:coaction5} Let $\rho$ be a coaction of $H^0$ on a unital $C^*$-algebra $A$. Then
the following are equivalent:
\newline
$(1)$ $\rho$ is free,
\newline
$(2)$ $A' \cap (A\rtimes_{\rho}H)=A' \cap A$,
\newline
$(3)$ The canonical conditional expectation $E_1^{\rho}$ from $A\rtimes_{\alpha}H$ onto $A$ is unique.
\end{prop}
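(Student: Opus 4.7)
\emph{Proof plan.} The plan is to establish the cycle $(1)\Rightarrow(2)\Rightarrow(3)\Rightarrow(1)$. The equivalence $(1)\Leftrightarrow(2)$ is a direct Fourier-type translation, while $(2)\Leftrightarrow(3)$ follows from Watatani's theory of conditional expectations of index-finite type applied to the finite Watatani-index inclusion $A\subset A\rtimes_\rho H$.

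For $(1)\Leftrightarrow(2)$, using the comatrix units $\{w_{ij}^k\}$ of $H$ and the dual matrix units $\{\phi_{ij}^k\}$ of $H^0$, I would introduce the linear bijection
$$\Phi:A\rtimes_\rho H\to A\otimes H^0,\qquad \sum_{k,i,j}a_{ij}^k\rtimes_\rho w_{ij}^k\mapsto \sum_{k,i,j}a_{ij}^k\otimes\phi_{ij}^k.$$
A direct computation using the crossed-product rule $(1\rtimes_\rho h)(b\rtimes_\rho 1)=(h_{(1)}\cdot_\rho b)\rtimes_\rho h_{(2)}$ together with $h\cdot_\rho b=(\id\otimes h)\rho(b)$ should show that $zb=bz$ for every $b\in A$ if and only if $\Phi(z)\rho(b)=(b\otimes 1^0)\Phi(z)$ for every $b\in A$. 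I would also verify that $\Phi$ maps the canonical copy of $A$ inside $A\rtimes_\rho H$ onto $A\otimes\BC\tau$; this reduces to the fact that the dual in $H^0$ of the expansion of $1_H$ in the basis $\{w_{ij}^k\}$ is a nonzero scalar multiple of the Haar projection $\tau$. With these identifications the definition of freeness in Definition \ref{def:coaction1} becomes precisely statement~$(2)$.

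For $(2)\Leftrightarrow(3)$, I would invoke Watatani's theorem. The inclusion $A\subset A\rtimes_\rho H$ has finite Watatani index $N$, and $E_1^\rho$ admits an explicit quasi-basis built from the comatrix units, so every conditional expectation of index-finite type from $A\rtimes_\rho H$ onto $A$ has the form $b\mapsto E_1^\rho(hb)$ for a unique positive invertible $h\in A'\cap(A\rtimes_\rho H)$ satisfying $E_1^\rho(h)=1$. Assuming~$(2)$, every such $h$ lies in $A'\cap A\subseteq A$, so $E_1^\rho(h)=h=1$, and uniqueness of $E_1^\rho$ follows. Conversely, if $(2)$ fails, by taking a real or imaginary part I may select a self-adjoint $z\in(A'\cap(A\rtimes_\rho H))\setminus A$. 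Since $E_1^\rho$ is $A$-bimodular, $E_1^\rho(z)\in A'\cap A$, and then $h_\varepsilon:=1+\varepsilon(z-E_1^\rho(z))$ lies in $A'\cap(A\rtimes_\rho H)$, is positive invertible for small $\varepsilon>0$, and satisfies $E_1^\rho(h_\varepsilon)=1$. The conditional expectation $b\mapsto E_1^\rho(h_\varepsilon b)$ is then different from $E_1^\rho$, since equality would, by faithfulness of $E_1^\rho$, force $h_\varepsilon=1$ and hence $z\in A$, contradicting the choice of $z$; this violates~$(3)$.

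The main obstacle is the combinatorial bookkeeping in $(1)\Leftrightarrow(2)$: correctly matching the Sweedler components of the basis elements $w_{ij}^k$ under $\Delta$ against the expansion of $\rho(b)$ in the dual basis $\{\phi_{ij}^k\}$, and pinning down the precise scalar that relates the dual of $1_H$ to $\tau$. Once this Fourier-type identification is set up, the rest of the argument is essentially routine.
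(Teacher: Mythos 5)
Your overall architecture coincides with the paper's. For $(1)\Leftrightarrow(2)$ the paper performs exactly the Fourier-type translation you sketch, passing between $x=\sum_{i,j,k}x_{ij}^k\rtimes_{\rho}w_{ij}^k\in A'\cap(A\rtimes_{\rho}H)$ and $z=\sum_{i,j,k}x_{ij}^k\otimes\phi_{ji}^k\in A\otimes H^0$; note the transposed indices $\phi_{ji}^k$ (one of the bookkeeping points you flagged), and note that no scalar intervenes, since $1_H$ belongs to the basis $\{w_{ij}^k\}$, $\tau$ belongs to $\{\phi_{ij}^k\}$, and $\tau(1)=1$. For $(2)\Rightarrow(3)$ the paper likewise invokes \cite[Proposition 1.4.1]{Watatani:index}. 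One caution there: you should quote Watatani's result for \emph{arbitrary} conditional expectations $F$ from $A\rtimes_{\rho}H$ onto $A$ (it applies to all of them once $E_1^{\rho}$ has a quasi-basis, and the element $h$ need not be invertible); restricting, as you wrote, to expectations ``of index-finite type'' would only prove uniqueness within that subclass, which is strictly weaker than statement $(3)$.

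The genuine gap is in your $(3)\Rightarrow(2)$. The map $F(b)=E_1^{\rho}(h_\varepsilon b)$ with $h_\varepsilon=1+\varepsilon(z-E_1^{\rho}(z))$ is unital and $A$-bimodular, but nothing guarantees it is \emph{positive}: $E_1^{\rho}$ is not a trace on $A\rtimes_{\rho}H$, and for a general faithful conditional expectation $E:B\to A$ and a positive invertible $h\in A'\cap B$ with $E(h)=1$, the map $E(h\,\cdot)$ can fail positivity for every $\varepsilon>0$. Concretely, take $A=\BC\subseteq B=M_2(\BC)$, $E(x)=\lambda x_{11}+(1-\lambda)x_{22}$ with $\lambda\neq\tfrac12$, and $z=c$ the self-adjoint matrix with $c_{12}=c_{21}=1$, $c_{11}=c_{22}=0$ (so $E(c)=0$ and $h_\varepsilon=1+\varepsilon c$): for the positive matrix $x$ with $x_{11}=x_{22}=1$, $x_{12}=\overline{x_{21}}=-i$, one computes $E(h_\varepsilon x)=1+i\varepsilon(2\lambda-1)\notin\BR$, so $E(h_\varepsilon\,\cdot)$ is not even self-adjoint-valued on positive elements. (Watatani's correspondence is injective but not onto all normalized positive $h$'s, so no contradiction arises.) The repair is to symmetrize: $F(b)=E_1^{\rho}(h_\varepsilon^{1/2}\,b\,h_\varepsilon^{1/2})$ is completely positive, unital because $E_1^{\rho}(h_\varepsilon)=1$, and $A$-bimodular because $h_\varepsilon^{1/2}\in A'\cap(A\rtimes_{\rho}H)$; showing $F\neq E_1^{\rho}$ then needs a genuine argument using faithfulness (if $E_1^{\rho}(gxg)=E_1^{\rho}(x)$ for all $x$, where $g=h_\varepsilon^{1/2}$, a spectral argument combined with $E_1^{\rho}(g^2)=1$ and faithfulness forces $g=1$, i.e.\ $z\in A$), not merely the cancellation you invoke for the non-symmetrized map. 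This symmetrized perturbation plus faithfulness is exactly what the paper outsources to \cite[Lemma 3.1.1]{Zarikian:expectation} together with the faithfulness of $E_1^{\rho}$ from \cite[Lemma 3.14]{KT1:inclusion}; as written, your construction does not produce a conditional expectation, so this implication is not yet proved.
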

\begin{proof} $(1)\Rightarrow (2)$: Let $x\in A' \cap(A\rtimes_{\rho}H)$. Then we can write that
$$
x=\sum_{i, j, k}x_{ij}^k \rtimes_{\rho}w_{ij}^k ,
$$
where $x_{ij}^k \in A$ for any $i, j, k$. For any $a\in A$,
\begin{align*}
(a\rtimes_{\rho}1)x & =\sum_{i, j, k}ax_{ij}^k \rtimes_{\rho}w_{ij}^k , \\
x(a\rtimes_{\rho}1 ) & =\sum_{i, j, k}(x_{ij}^k \rtimes_{\rho}w_{ij}^k )(a\rtimes_{\rho}1)
=\sum_{i, j. k, j_1}x_{ij}^k [w_{ij_1}^k \cdot_{\rho}a]\rtimes_{\rho}w_{j_1 j}^k \\
& =\sum_{i, j, k, j_1}x_{j_1 j}^k [w_{j_1 i}^k \cdot_{\rho}a]\rtimes_{\rho}w_{ij}^k .
\end{align*}
Since $(a\rtimes_{\rho}1)x=x(a\rtimes_{\rho}1)$,
$$
ax_{ij}^k =\sum_{j_1}x_{j_1 j}^k [w_{j_1 i}^k \cdot_{\rho}a] \quad (*)
$$
for any $i, j, k$ and any $a\in A$. Let $z$ be the element in $A\otimes H^0$ defined by
$$
z=\sum_{i, j, k}x_{ij}^k \otimes\phi_{ji}^k .
$$
We claim that $z\rho(a)=(a\otimes 1^0 )z$ for any $a\in A$. Indeed, for any $r, s, t$,
\begin{align*}
(z\rho(a))^{\widehat{}}(w_{st}^r ) & =\sum_{i, j, k, t_1}x_{ij}^k \phi_{ji}^k (w_{st_1}^r )[w_{t_1 t}^r \cdot_{\rho}a] \\
& =\sum_{t_1}x_{t_1 s}^r [w_{t_1 t}^r \cdot_{\rho}a]=ax_{ts}^r
\end{align*}
by Equation (*). On the other hand, for any $r, s, t$,
$$
[(a\otimes 1^0 )z]^{\widehat{}}(w_{st}^r ) =\sum_{i, j, k}ax_{ij}^k \phi_{ji}^k (w_{st}^r )
=ax_{ts}^r .
$$
Hence $(a\otimes 1^0 )=z\rho(a)$ for any $a\in A$. Since $\rho$ is free, $z\in (A\cap A')\otimes\BC\tau$.
That is,
$$
z=\sum_{i, j, k}x_{ij}^k \otimes\phi_{ji}^k \in (A' \cap A)\otimes\BC\tau .
$$
Since $\tau\in\{\phi_{ij}^k \}$, $z=x_0 \otimes\tau$, where $x_0 \in A' \cap A$. Therefore,
$x=x_0 \rtimes_{\rho}1$ since $\tau(1)=1$.
\newline
$(2)\Rightarrow (1)$: Let $z\in A\otimes H^0$ such that $z\rho(a)=(a\otimes 1^0 )z$ for any $a\in A$.
Then for any $h\in H$,
$$
\widehat{z}(h_{(1)})[h_{(2)}\cdot_{\rho}a]=a\widehat{z}(h) . \quad (**)
$$
Let $x$ be the element in $A\rtimes_{\rho}H$ defined by
$$
x=\sum_{i, j, k}\widehat{z}(w_{ij}^k )\rtimes_{\rho}w_{ji}^k .
$$
Then for any $a\in A$, 
$$
(a\rtimes_{\rho}1)x=\sum_{i, j, k}a\widehat{z}(w_{ij}^k )\rtimes_{\rho}w_{ji}^k .
$$
By Equation (**),
\begin{align*}
x(a\rtimes_{\rho}1) & =\sum_{i, j, k}(\widehat{z}(w_{ij}^k )\rtimes_{\rho}w_{ji}^k )(a\rtimes_{\rho}1 )
=\sum_{i, j, k, i_1}\widehat{z}(w_{ij}^k )[w_{ji_1}^k \cdot_{\rho}a]\rtimes_{\rho}w_{i_1 i}^k \\
& =\sum_{i, k, i_1}a\widehat{z}(w_{ii_1}^k )\rtimes_{\rho}w_{i_1 i}^k .
\end{align*}
Hence $(a\rtimes_{\rho}1)x=x(a\rtimes_{\rho}1)$. That is,
$x\in A' \cap(A\rtimes_{\rho}H)$. By Condition (2), $x\in A' \cap A$. Since
$1\in \{w_{ij}^k \}$, $x=\widehat{z}(1)\rtimes_{\rho}1$ and $\widehat{z}(1)\in A' \cap A$.
Furthermore, $\widehat{z}(w_{ij}^k )=0$ if $w_{ij}^k \ne 1$. Therefore,
$$
z=\widehat{z}(1)\otimes\tau\in (A' \cap A)\otimes\BC\tau
$$
since $\tau\in\{\phi_{ij}^k \}$.
\newline
$(2)\Rightarrow (3)$: Let $F$ be a conditional expectation from $A\rtimes_{\rho}H$ onto $A$.
By Watatani \cite [Proposition 1.4.1]{Watatani:index}, there is an element
$z\in A' \cap(A\rtimes_{\rho}H)$ with $E_1^{\rho} (z)=1$ such that
$$
F(x)=E_1^{\rho} (az)
$$
for any $x\in A\rtimes_{\rho}H$. Then by Condition (2), $z\in A' \cap A$. Thus $z=E_1^{\rho} (z)=1$
and $F(x)=E_1^{\rho} (x)$ for any $x\in A\rtimes_{\rho}H$. Hence $F=E_1^{\rho}$.
\newline
$(3)\Rightarrow (2)$: Since $E_1^{\rho}$ is faithful by \cite [Lemma 3.14]{KT1:inclusion},
we obtain the conclusion by \cite [Lemma 3.1.1]{Zarikian:expectation}.
\end{proof}

Let $\rho$ be a coaction of $H^0$ on a (non-unital) $C^*$-algebra $A$. Then in the same way as in
\cite {Kodaka:equivariance}, we can extend $\rho$ to a coaction $\underline{\rho}$ of $H^0$ on $M(A)$.
We note that $\underline{\rho}$ is strictly continuous.

\begin{lemma}\label{lem:coaction6} With the above notation, $\rho$ is free if and only if $\underline{\rho}$
is free.
\end{lemma}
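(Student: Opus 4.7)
The plan is to treat the two implications separately, using direct restriction from $M(A)$ to $A$ in one direction and strict continuity of $\underline{\rho}$ in the other.

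For the ``if'' direction, suppose $\underline{\rho}$ is free and let $x\in M(A)\otimes H^0$ satisfy $x\rho(a)=(a\otimes 1^0 )x$ for all $a\in A$. Since $\underline{\rho}$ extends $\rho$, this reads $x\underline{\rho}(a)=(a\otimes 1^0 )x$ for $a\in A$. I will promote the identity to all $a\in M(A)$ by strict continuity: pick a bounded net $(a_\lambda )\subseteq A$ with $a_\lambda \to a$ strictly; then $\underline{\rho}(a_\lambda )\to\underline{\rho}(a)$ strictly in $M(A\otimes H^0 )=M(A)\otimes H^0$ (the identification being available because $H^0$ is unital and finite dimensional), and left multiplication by the fixed $x$ and by $a_\lambda \otimes 1^0$ is strictly continuous on bounded sets, so passing to the limit yields $x\underline{\rho}(a)=(a\otimes 1^0 )x$ for every $a\in M(A)$. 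Freeness of $\underline{\rho}$ then forces $x\in (M(A)' \cap M(A))\otimes\BC\tau$, and since $M(A)' \cap M(A)\subseteq A' \cap M(A)$, $\rho$ is free.

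For the ``only if'' direction, suppose $\rho$ is free and let $x\in M(A)\otimes H^0$ satisfy $x\underline{\rho}(a)=(a\otimes 1^0 )x$ for all $a\in M(A)$. Restricting to $a\in A$ and invoking freeness of $\rho$, I obtain $x=y\otimes\tau$ with $y\in A' \cap M(A)$. To upgrade $y$ into $M(A)' \cap M(A)$, I apply the $*$-homomorphism $\id\otimes\epsilon^0 :M(A)\otimes H^0 \to M(A)$ to the equation for arbitrary $a\in M(A)$: using $\epsilon^0 (\tau)=\tau(1_H )=1$ (the normalization of the Haar trace) together with the counit axiom $(\id\otimes\epsilon^0 )\circ\underline{\rho}=\id_{M(A)}$ (which passes from $A$ to $M(A)$ by strict continuity of $\underline{\rho}$), the left-hand side collapses to $ya$ and the right-hand side to $ay$. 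Hence $ya=ay$ for every $a\in M(A)$, giving $y\in M(A)' \cap M(A)$ and $x\in (M(A)' \cap M(A))\otimes\BC\tau$, so $\underline{\rho}$ is free.

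The only subtlety lies in the two strict-continuity passages, namely propagating the freeness identity from $A$ to $M(A)$ and checking that the counit axiom survives on $M(A)$; both are routine once one uses the finite dimensionality of $H^0$ to identify $M(A\otimes H^0 )$ with $M(A)\otimes H^0$ and the fact that $\underline{\rho}$ is already known to be strictly continuous, so I do not expect a genuine obstacle beyond these standard multiplier-algebra manipulations.
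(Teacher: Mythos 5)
Your proof is correct, and it is worth recording where it does and does not follow the paper. The ``if'' direction is essentially identical to the paper's: both arguments push the intertwining relation $x\underline{\rho}(a)=(a\otimes 1^0)x$ from $a\in A$ to all $a\in M(A)$ via strict density of $A$ in $M(A)$, strict continuity of $\underline{\rho}$, and strict continuity of multiplication by a fixed multiplier (your ``bounded sets'' caveat is harmless but not needed, since multiplication by a fixed element of $M(A)\otimes H^0$ is strictly continuous outright), and then use the trivial inclusion $M(A)'\cap M(A)\subset A'\cap M(A)$. In the ``only if'' direction you diverge at exactly one step. After freeness of $\rho$ yields $x=y\otimes\tau$ with $y\in A'\cap M(A)$, the paper upgrades $y$ by proving the general commutant identity $A'\cap M(A)=M(A)'\cap M(A)$, again by strict density; you instead slice the relation with the multiplicative map $\id\otimes\epsilon^0$, using the counit axiom for $\underline{\rho}$ and $\epsilon^0(\tau)=\tau(1)=1$, so that the two sides collapse to $ya$ and $ay$ for every $a\in M(A)$. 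Your computation is sound, and in fact the counit axiom on $M(A)$ comes for free: $\underline{\rho}$ is constructed in \cite{Kodaka:equivariance} as a genuine coaction of $H^0$ on $M(A)$, so the strict-continuity justification you sketch for it is not even necessary. The trade-off between the two routes: your counit trick is purely Hopf-algebraic and shorter at that point, but it is tied to the coaction structure, whereas the paper's identity $A'\cap M(A)=M(A)'\cap M(A)$ uses nothing but strict density and is reused verbatim in the proof of Theorem \ref{thm:coaction8} (both for $A'\cap M(A)$ and, by the same argument, for $A'\cap M(A\rtimes_{\rho}H)=M(A)'\cap M(A\rtimes_{\rho}H)$), which is why the paper isolates it as a standalone observation.
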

\begin{proof}
We suppose that $\rho$ is free. Let $x$ be an element in $M(A)\otimes H^0$
such that $x\underline{\rho}(a)=(a\otimes 1^0 )x$ for any $a\in M(A)$. Then $x\rho(a)=(a\otimes 1^0 )x$ for
any $a\in A$ since $\underline{\rho}|_A =\rho$. Since $\rho$ is free, $x\in (A' \cap M(A))\otimes\tau$.
Also, $A' \cap M(A)=M(A)' \cap M(A)$. Indeed, clearly $M(A)' \cap M(A)\subset A' \cap M(A)$.
Let $z\in A' \cap M(A)$. Then $za=az$ for any $a\in A$. Since $A$ is dense in $M(A)$ under the strict
topology. Thus $za=az$ for any $a\in M(A)$. Hence $A' \cap M(A)\subset M(A)' \cap M(A)$. That is,
$A' \cap M(A)=M(A)' \cap M(A)$. Therefore, $x\in (M(A)' \cap M(A))\otimes\tau$. It follows that $\underline{\rho}$
is free. Next, we suppose that $\underline{\rho}$ is free. Let $x$ be an element in $M(A)\otimes H^0$
such that $x\rho(a)=(a\otimes 1^0 )x$ for any $a\in A$. Then since $A$ is dense in $M(A)$ under the strict
topology and $\underline{\rho}$ is strictly continuous, $x\underline{\rho}(a)=(a\otimes 1^0 )x$ for any
$a\in M(A)$. Since $\underline{\rho}$ is free, $x\in (M(A)' \cap M(A))\otimes\tau$. Since
$M(A)' \cap M(A)=A' \cap M(A)$, $x\in (A' \cap M(A))\otimes\tau$. Thus $\rho$ is free.
\end{proof}

Let $\rho$ be a coaction of $H^0$ on a $C^*$-algebra $A$. Let $F^A$ be a conditional expectation from
$A\rtimes_{\rho}H$ onto $A$. We extend $F^A$ to a conditional expectation $F^{M(A)}$ from
$M(A)\rtimes_{\underline{\rho}}H$ onto $M(A)$. We note that
$M(A\rtimes_{\rho}H)=M(A)\rtimes_{\underline{\rho}}H$ by \cite [Lemma 2.10]{Kodaka:equivariance}.
Let $x$ be any element in $M(A)\rtimes_{\underline{\rho}}H$. Then there is a net
$\{x_{\lambda}\}_{\lambda\in\Lambda}\subset  A\rtimes_{\rho}H$ such that $x_{\lambda}$ is strictly convergent
to $x$ in $M(A)\rtimes_{\underline{\rho}}H$. Let $F^{M(A)}$ be the linear map from
$M(A)\rtimes_{\underline{\rho}}H$ onto
$M(A)$ defined by
$$
F^{M(A)}(x)=\lim_{\lambda} F^A (x_{\lambda}) ,
$$
where the limit is taken under the strict topology in $M(A)$. By the easy computations, we see that $F^{M(A)}$ is a
conditional expectation from $M(A)\rtimes_{\underline{\rho}}H$ onto $M(A)$ with
$F^{M(A)}|_{A\rtimes_{\rho}H}=F^A$.

\begin{lemma}\label{lem:coaction7} With the above notation, $F^{M(A)}$ is the unique conditional expectation
from $M(A\rtimes_{\rho}H)$ onto $M(A)$ such that $F^{M(A)}|_{A\rtimes_{\rho}H}=F^A$.
\end{lemma}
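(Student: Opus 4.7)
The plan is to establish uniqueness by showing that any conditional expectation $G$ from $M(A\rtimes_{\rho}H)$ onto $M(A)$ with $G|_{A\rtimes_{\rho}H}=F^A$ is strictly continuous, and hence must agree with $F^{M(A)}$ on the strictly dense subalgebra $A\rtimes_{\rho}H$.

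First I would let $G$ be any such conditional expectation. By Tomiyama's theorem, $G$ is an $M(A)$-bimodule map of norm $1$. The next step is to verify strict continuity: if $x_{\lambda}\to x$ strictly in $M(A\rtimes_{\rho}H)$, then for any $a\in A$ we have $(x_{\lambda}-x)a\to 0$ and $a(x_{\lambda}-x)\to 0$ in norm (since $A\subset A\rtimes_{\rho}H$). Using the bimodule property,
\begin{equation*}
\|(G(x_{\lambda})-G(x))a\|=\|G((x_{\lambda}-x)a)\|\leq\|(x_{\lambda}-x)a\|\longrightarrow 0,
\end{equation*}
and symmetrically on the left. Hence $G(x_{\lambda})\to G(x)$ strictly in $M(A)$.

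Next, for an arbitrary $x\in M(A\rtimes_{\rho}H)$, I would pick a net $\{x_{\lambda}\}\subset A\rtimes_{\rho}H$ strictly convergent to $x$, as in the construction of $F^{M(A)}$. Since $x_{\lambda}\in A\rtimes_{\rho}H$, we have $G(x_{\lambda})=F^A(x_{\lambda})$. By the strict continuity just proved, $G(x)=\lim_{\lambda}G(x_{\lambda})=\lim_{\lambda}F^A(x_{\lambda})=F^{M(A)}(x)$, where the limits are in the strict topology. This proves $G=F^{M(A)}$.

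The main potential obstacle is purely a bookkeeping one: ensuring that the strict limit defining $F^{M(A)}(x)$ is independent of the approximating net and that $G$'s bimodule property is valid over $M(A)$ (not merely $A$), which is why the appeal to Tomiyama's theorem is needed. Everything else reduces to the observation that a conditional expectation onto $M(A)$ automatically enjoys the continuity needed to pass from the dense subalgebra $A\rtimes_{\rho}H$ to all of $M(A\rtimes_{\rho}H)$.
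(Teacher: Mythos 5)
Your proof is correct, and at bottom it runs on the same mechanism as the paper's: the module property of a conditional expectation over $M(A)$, plus the fact that multiplication by $a\in A\subset A\rtimes_{\rho}H$ carries $M(A\rtimes_{\rho}H)$ into $A\rtimes_{\rho}H$, where $G$ and $F^{M(A)}$ both agree with $F^A$. The packaging differs, though. The paper dispenses with nets and continuity entirely: for $x\in M(A\rtimes_{\rho}H)$ and $a\in A$ it computes
$\|G(x)a-F^{M(A)}(x)a\|=\|G(xa)-F^{M(A)}(xa)\|=\|F^A(xa)-F^A(xa)\|=0$,
and concludes $G(x)=F^{M(A)}(x)$ because an element of $M(A)$ is determined by its right action on $A$. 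Your route instead proves that any conditional expectation onto $M(A)$ extending $F^A$ is strictly continuous (the estimate $\|(G(x_{\lambda})-G(x))a\|\le\|(x_{\lambda}-x)a\|$ and its left-handed twin do yield strict convergence, and strict limits in $M(A)$ are unique since the Hausdorff property holds), and then passes to the limit along the net defining $F^{M(A)}$. This is valid and buys a reusable general fact (strict continuity of such expectations), at the cost of leaning on the well-definedness of the net construction of $F^{M(A)}$, which the paper's direct computation never needs. One cosmetic remark: the appeal to Tomiyama's theorem is superfluous here, since the $M(A)$-bimodule property is already part of the definition of a conditional expectation onto $M(A)$, and in any case you only ever use the module property for $a\in A$.
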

\begin{proof} Let $G^{M(A)}$ be a conditional expectation from $M(A\rtimes_{\rho}H)$ onto $M(A)$
such that $G^{M(A)}|_{A\rtimes_{\rho}H}=F^A$. Let $x\in M(A\rtimes_{\rho}H)$ and $a\in A$. Then
$$
||G^{M(A)}(x)a-F^{M(A)}(x)a||=||G^{M(A)}(xa)-F^{M(A)}(xa)||=||F^A (xa)-F^A (xa)||=0 .
$$
Thus $G^{M(A)}(x)=F^{M(A)}(x)$ for any $x\in M(A\rtimes_{\rho}H)$.
\end{proof}

\begin{thm}\label{thm:coaction8} Let $\rho$ be a coaction of $H^0$ on a $C^*$-algebra $A$. Then
the following conditions are equivalent:
\newline
$(1)$ $\rho$ is free,
\newline
$(2)$ $A' \cap M(A\rtimes_{\rho}H)=A' \cap M(A)$,
\newline
$(3)$ The canonical conditional expectation $E_1^{\rho}$ from $A\rtimes_{\rho}H$ onto $A$ is unique.
\end{thm}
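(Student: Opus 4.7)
The plan is to bootstrap the theorem from Proposition~\ref{prop:coaction5}, applied to the extended coaction $\underline{\rho}$ on the unital $C^*$-algebra $M(A)$, combined with Lemmas~\ref{lem:coaction6} and~\ref{lem:coaction7} and the identification $M(A\rtimes_\rho H)=M(A)\rtimes_{\underline{\rho}}H$. Writing $(1')$, $(2')$, $(3')$ for the three conditions of Proposition~\ref{prop:coaction5} applied to $\underline{\rho}$, Lemma~\ref{lem:coaction6} gives $(1)\Leftrightarrow(1')$ and Proposition~\ref{prop:coaction5} itself delivers $(1')\Leftrightarrow(2')\Leftrightarrow(3')$. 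It therefore remains to match $(2)$ with $(2')$ and $(3)$ with $(3')$.

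For $(2)\Leftrightarrow(2')$ the key observations are the two relative-commutant identities $M(A)'\cap M(A)=A'\cap M(A)$ and $M(A)'\cap M(A\rtimes_\rho H)=A'\cap M(A\rtimes_\rho H)$. The first was already established inside the proof of Lemma~\ref{lem:coaction6} via strict density of $A$ in $M(A)$. For the second, given $x\in A'\cap M(A\rtimes_\rho H)$ and $b\in M(A)$, take an approximate unit $\{e_\lambda\}$ of $A$; then $be_\lambda,e_\lambda b\in A$ commute with $x$, and I would verify $be_\lambda\to b$ strictly in $M(A\rtimes_\rho H)$ by noting that $A\rtimes_\rho H$ is a finitely generated $A$-bimodule (since $H$ is finite-dimensional), so any approximate unit for $A$ is also an approximate unit for $A\rtimes_\rho H$, which combined with the strict continuity of multiplication in $M(A\rtimes_\rho H)$ on bounded sets yields $xb=bx$.

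For $(1)\Rightarrow(3)$, assume $\rho$ (equivalently, by Lemma~\ref{lem:coaction6}, $\underline{\rho}$) is free. Given any conditional expectation $F^A$ from $A\rtimes_\rho H$ onto $A$, Lemma~\ref{lem:coaction7} extends it uniquely to a conditional expectation $F^{M(A)}$ from $M(A\rtimes_\rho H)$ onto $M(A)$. Condition $(3')$ applied to the free $\underline{\rho}$ forces $F^{M(A)}=E_1^{\underline{\rho}}$, and restricting back gives $F^A=E_1^{\underline{\rho}}|_{A\rtimes_\rho H}=E_1^\rho$.

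For $(3)\Rightarrow(1)$, I would argue by contrapositive. If $\rho$ is not free, Lemma~\ref{lem:coaction6} together with $(2')$ produce $w\in M(A)'\cap M(A\rtimes_\rho H)$ not lying in $M(A)'\cap M(A)$. Setting $z:=w-E_1^{\underline{\rho}}(w)+1$ yields an element of $M(A)'\cap M(A\rtimes_\rho H)$ with $E_1^{\underline{\rho}}(z)=1$ and $z\neq 1$ (otherwise $w$ would sit in $M(A)$). The prescription $F^A(x):=E_1^{\underline{\rho}}(xz)$ then defines a conditional expectation from $A\rtimes_\rho H$ onto $A$ distinct from $E_1^\rho$, because $xz\in A\rtimes_\rho H$ (the latter being an ideal in $M(A\rtimes_\rho H)$) and $E_1^{\underline{\rho}}|_{A\rtimes_\rho H}=E_1^\rho$, while the $A$-bimodule and idempotency properties follow from $z\in M(A)'$ and $E_1^{\underline{\rho}}(z)=1$; this contradicts $(3)$. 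The main obstacle here is ensuring that $F^A$ is genuinely completely positive: one must replace $z$ by a positive element in $M(A)'\cap M(A\rtimes_\rho H)\setminus M(A)$, which is essentially a non-unital analogue of the Watatani parameterization \cite[Proposition~1.4.1]{Watatani:index}, invoked via faithfulness of $E_1^\rho$ from \cite[Lemma~3.14]{KT1:inclusion} and \cite[Lemma~3.1.1]{Zarikian:expectation} at the $M(A)$-level.
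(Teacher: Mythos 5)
Your overall architecture is the paper's own: both proofs bootstrap Theorem~\ref{thm:coaction8} from Proposition~\ref{prop:coaction5} applied to $\underline{\rho}$ on $M(A)$, using Lemma~\ref{lem:coaction6}, Lemma~\ref{lem:coaction7}, the identification $M(A\rtimes_{\rho}H)=M(A)\rtimes_{\underline{\rho}}H$, and the two strict-density commutant identities $A'\cap M(A)=M(A)'\cap M(A)$ and $A'\cap M(A\rtimes_{\rho}H)=M(A)'\cap M(A\rtimes_{\rho}H)$. Your $(1)\Leftrightarrow(2)$ and your $(1)\Rightarrow(3)$ (extend $F^A$ to $F^{M(A)}$, force $F^{M(A)}=E_1^{\underline{\rho}}$ by uniqueness at the $M(A)$-level, restrict) coincide with the paper's arguments. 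The divergence, and the genuine gap, is your $(3)\Rightarrow(1)$.

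There the map $F^A(x):=E_1^{\underline{\rho}}(xz)$ with $z=w-E_1^{\underline{\rho}}(w)+1$ is in general not even $*$-preserving, since $z$ commutes with $M(A)$ but not with $x\in A\rtimes_{\rho}H$, so $(xz)^*=z^*x^*\neq x^*z^*$ inside $E_1^{\underline{\rho}}$. You flag the positivity problem, but your proposed fix --- replace $z$ by a positive element of $M(A)'\cap M(A\rtimes_{\rho}H)$ outside $M(A)$ --- does not close it: even for $z\geq 0$ with $E_1^{\underline{\rho}}(z)=1$, the map $x\mapsto E(xz)$ need not be positive (already for $\BC 1\subset M_2(\BC)$ with $E$ a faithful non-tracial state and $z$ positive not commuting with the density element). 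The complete-positivity repair is $F(x)=E_1^{\underline{\rho}}(z^{1/2}xz^{1/2})$, which is CP and an $A$-bimodule map because $z^{1/2}\in M(A)'$, but then your non-triviality argument collapses: $z\neq 1$ no longer yields $F\neq E_1^{\rho}$ without an injectivity statement for the parameterization, and Watatani's \cite[Proposition~1.4.1]{Watatani:index} --- a unital, index-finite-type statement which the paper uses only in the direction ``every expectation arises from some $z$'' --- does not hand you the converse construction you need, nor do you supply a substitute. The paper sidesteps the construction entirely: assuming $(3)$, any conditional expectation $G$ from $M(A\rtimes_{\rho}H)$ onto $M(A)$ restricts to a conditional expectation from $A\rtimes_{\rho}H$ onto $A$ (since $A\rtimes_{\rho}H$ is an ideal and, for an approximate unit $\{e_{\lambda}\}$ of $A$, $G(e_{\lambda}x)=e_{\lambda}G(x)\in A$ with $e_{\lambda}x\to x$ in norm), hence equals $E_1^{\rho}$ by $(3)$, hence $G=E_1^{\underline{\rho}}$ by the uniqueness-of-extension in Lemma~\ref{lem:coaction7}; so uniqueness passes \emph{upward} to the $M(A)$-level, and Proposition~\ref{prop:coaction5} --- whose own $(3)\Rightarrow(2)$ already encapsulates the Zarikian lemma you invoke --- gives $(2)$ and then $(1)$. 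Your closing sentence gestures at exactly this (``at the $M(A)$-level''), but the bridge from uniqueness over $A$ to uniqueness over $M(A)$ is precisely the step your construction was meant to replace, and it is never supplied; as written, the $(3)\Rightarrow(1)$ direction is incomplete.
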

\begin{proof} $(1)\Rightarrow (2)$: Since $\rho$ is free, by Lemma \ref {lem:coaction6}, $\underline{\rho}$ is
free. Hence by Proposition \ref {prop:coaction5} and \cite [Lemma 2.10]{Kodaka:equivariance},
$$
M(A)' \cap M(A\rtimes_{\rho}H)=M(A)' \cap (M(A)\rtimes_{\underline{\rho}}H)=M(A)' \cap M(A) .
$$
By the proof of Lemma \ref {lem:coaction6}, $A' \cap M(A)=M(A)' \cap M(A)$. Also, we can see that
$A' \cap M(A\rtimes_{\rho}H)=M(A)' \cap M(A\rtimes_{\rho}H)$ in the same way as in the proof of
Lemma \ref {lem:coaction6}. Thus $A' \cap M(A\rtimes_{\rho}H)=A' \cap M(A)$.
\newline
$(2)\Rightarrow (1)$: By the discussions of $(1)\Rightarrow (2)$,
$M(A)' \cap M(A\rtimes_{\rho}H)=M(A)' \cap M(A)$. Hence by Proposition \ref {prop:coaction5} and
\cite [Lemma 2.10]{Kodaka:equivariance}, $\underline{\rho}$ is free. Thus by Lemma \ref {lem:coaction6},
$\rho$ is free.
\newline
$(2) \Rightarrow (3)$: Let $F^A$ be a conditional expectation from $A\rtimes_{\rho}H$ onto $A$.
By the discussions before this theorem, there is a conditional expectation $F^{M(A)}$ from
$M(A)\rtimes_{\underline{\rho}}H$ onto $M(A)$ extending $F^A$. Also, since
$A' \cap M(A\rtimes_{\rho}H)=A' \cap M(A)$, $M(A)' \cap (M(A)\rtimes_{\underline{\rho}}H)=M(A)' \cap M(A)$.
Thus by Proposition \ref{prop:coaction5}, the canonical conditional expectation $E_1^{\underline{\rho}}$
from $M(A)\rtimes_{\underline{\rho}}H$ onto $M(A)$ is unique. Hence $E_1^{\underline{\rho}}=F^{M(A)}$.
Since $E_1^{\underline{\rho}}|_{A\rtimes_{\rho}H}=E_1^{\rho}$ and
$F^{M(A)}|_{A\rtimes_{\rho}H}=F^A$, $E_1^{\rho} =F^A$.
Therefore we obtain Condition (3).
\newline
$(3)\Rightarrow (2)$: By the discussions of $(2)\Rightarrow (3)$ and Lemma \ref{lem:coaction7},
the canonical conditional $E_1^{\underline{\rho}}$ from $M(A)\rtimes_{\underline{\rho}}H$ onto $M(A)$
is unique. Thus by Proposition \ref{prop:coaction5}, $M(A)' \cap M(A\rtimes_{\rho}H)=M(A)' \cap M(A)$.
Hence $A' \cap M(A\rtimes_{\rho}H)=A' \cap M(A)$.
\end{proof}

\begin{remark}\label{rem:coaction8-2} In the following way, we can see that there is a free coaction of a finite
dimensional $C^*$-Hopf algebra on a simple unital $C^*$-algebra: Let $A\subset B$ be a depth $2$, unital
inclusion of simple unital $C^*$-algebras of Watatani index-finite type. We suppose that $A' \cap B=\BC 1$.
Then by \cite [Corollary 6.4]{Izumi:simple}, there is a coaction $\rho$ of a finite dimensional $C^*$-Hopf
algebra $H^0$ on $B$ such that $A$ is a fixed point $C^*$-algebra $B^{\rho}$. Since $A' \cap B=\BC 1$,
$B' \cap (B\rtimes_{\rho}H)=\BC 1$ by the proof of \cite [Proposition 2.7.3]{Watatani:index}. Hence
by Proposition \ref{prop:coaction5}, $\rho$ is a free coaction of $H^0$ on $B$.
\end{remark}

Following Blattner, Cohen and Montgomery \cite {BCM:crossed}, we give the definitions of an inner coaction
and an outer coaction. Let $\rho$ be a coaction of $H^0$ on a $C^*$-algebra $A$. Let $\rho_{H^0}^A$ be
the trivial coaction of $H^0$ on $A$.
\begin{Def}\label{coaction:9} (1) $\rho$ is
\sl
inner
\rm
if there is a unitary element $u$ in $M(A)\otimes H^0$
satisfying the following:
\newline
(i) $\rho=\Ad(u)\circ\rho_{H^0}^A$,
\newline
(ii) $(u\otimes 1^0 )(\underline{\rho_{H^0}^A}\otimes \id_{H^0})(u)=(\id_{M(A)}\otimes\Delta^0 )(u)$,
\newline
where $\underline{\rho_{H^0}^A}$ is the coaction of $H^0$ on $M(A)$ induced by $\rho_{H0}^A$,
that is, $\underline{\rho_{H^0}^A}=\rho_{H^0}^{M(A)}$, the trivial coaction of $H^0$ on $M(A)$.
\newline
(2) $\rho$ is
\sl
outer
\rm
if $H^0$ is not trivial and if, whenever $\pi^0: H^0 \rightarrow K^0$ is a surjective $C^*$-Hopf algebra
homomorphism such that the induced $K^0$-coaction $\sigma=(\id_A \otimes\pi^0 ) \circ\rho$ is inner,
then $K^0$ is trivial.
\end{Def}

\begin{prop}\label{prop:coaction10} Let $\rho$ be a coaction of a non-trivial $C^*$-Hopf algebra
$H^0$ on a $C^*$-algebra $A$. If $\rho$ is free, then $\rho$ is outer.
\end{prop}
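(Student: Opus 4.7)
The plan is to argue by contraposition: suppose there is a surjective $C^*$-Hopf algebra homomorphism $\pi^0: H^0 \to K^0$ such that $\sigma := (\id_A \otimes \pi^0) \circ \rho$ is inner, and show $K^0$ must be one-dimensional. Since $\pi^0$ is a surjective $*$-homomorphism between finite-dimensional $C^*$-algebras, its kernel is a $C^*$-direct summand, so there is a central projection $p \in H^0$ with $\ker \pi^0 = (1-p) H^0$ and $\pi^0$ restricts to a $*$-isomorphism $pH^0 \cong K^0$. I fix a $*$-algebra section $\iota: K^0 \to H^0$ inverse to this isomorphism, so that $\iota(1_{K^0}) = p$ and $\iota \circ \pi^0$ is multiplication by $p$ on $H^0$. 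Note $\iota$ need not (and typically will not) be a Hopf algebra map, but this will not matter.

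Next, I translate the innerness of $\sigma$ into an intertwining relation in $M(A) \otimes H^0$ suited to freeness. If $u \in M(A) \otimes K^0$ is a unitary implementing $\sigma$, set $v := (\id \otimes \iota)(u) \in M(A) \otimes H^0$; then $v$ is a partial isometry with $v v^* = v^* v = 1 \otimes p$. Applying $\id \otimes \iota$ to $\sigma(a) = u(a \otimes 1_{K^0}) u^*$ gives $(1 \otimes p)\rho(a) = v(a \otimes p) v^*$, and left-multiplying by $v^*$ while using the identities $v^*(1 \otimes p) = v^* = (1 \otimes p)v^*$ and $v^* v = 1 \otimes p$ collapses this to
\[
v^* \rho(a) = (a \otimes p) v^* = (a \otimes 1^0) v^*
\]
for every $a \in A$, where the second equality absorbs $p$ using $(1 \otimes p) v^* = v^*$.

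Now I invoke freeness of $\rho$: $v^* \in (A' \cap M(A)) \otimes \BC \tau$, so $v^* = m \otimes \tau$ for some $m \in A' \cap M(A)$. Comparing $v v^* = m^* m \otimes \tau$ with $v v^* = 1 \otimes p$ and pairing with any state of $M(A)$ forces $p = \lambda \tau$ for a scalar $\lambda \geq 0$; since both $p$ and $\tau$ are nonzero projections ($K^0 \ne 0$ since it is a Hopf algebra), this gives $p = \tau$. Because $\tau$ satisfies $f \tau = \epsilon^0(f) \tau$ for every $f \in H^0$, the ideal $\tau H^0 = \BC \tau$ is one-dimensional; therefore $K^0 \cong p H^0 = \BC \tau$ is trivial, completing the proof.

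The main obstacle is the lifting step: $\pi^0$ has no Hopf algebra section in general, so one cannot directly pull the $K^0$-implementer $u$ back to an $H^0$-implementer. The resolution is that a mere $*$-algebra section suffices, because centrality of $p$ in $H^0$ makes $\iota \circ \pi^0$ equal to multiplication by $p$; this is what allows the derived relation $v^* \rho(a) = (a \otimes p) v^*$ to absorb $p$ into $v^*$ and assume the exact form to which freeness applies.
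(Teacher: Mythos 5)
Your proof is correct, but it takes a genuinely different route from the paper's. The paper never lifts the implementing unitary back to $H^0$. Instead it first proves that the quotient coaction $\sigma=(\id_A\otimes\pi^0)\circ\rho$ is itself free: dualizing $\pi^0$ to an injective map $\pi:K\to H$ gives $A\rtimes_{\sigma}K\subset A\rtimes_{\rho}H$ and, since $\overline{(A\rtimes_{\sigma}K)(A\rtimes_{\rho}H)}=A\rtimes_{\rho}H$, also $M(A\rtimes_{\sigma}K)\subset M(A\rtimes_{\rho}H)$; squeezing $A'\cap M(A)\subset A'\cap M(A\rtimes_{\sigma}K)\subset A'\cap M(A\rtimes_{\rho}H)=A'\cap M(A)$ and invoking the relative-commutant characterization of freeness (Theorem \ref{thm:coaction8}) shows $\sigma$ is free. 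Only then does the paper apply freeness --- of $\sigma$, not of $\rho$ --- directly to $x=u^*$ (since $u^*\sigma(a)=(a\otimes 1^0)u^*$), obtaining $u\in(A'\cap M(A))\otimes\BC\tau_{K^0}$, and unitarity forces $\tau_{K^0}=1_{K^0}$, so $K^0$ is trivial. You bypass all of this machinery: using the central projection $p$ with $\ker\pi^0=(1-p)H^0$ and a mere $*$-algebra section $\iota$, you transport $u$ to a partial isometry $v$ with $v^*v=vv^*=1\otimes p$, and your derivation of $v^*\rho(a)=(a\otimes 1^0)v^*$ is sound (the absorption identities $v^*(1\otimes p)=v^*=(1\otimes p)v^*$ do hold for such $v$, and centrality of $p$ correctly makes $\iota\circ\pi^0$ multiplication by $p$); then freeness of $\rho$ itself gives $v^*=m\otimes\tau$, the comparison $m^*m\otimes\tau=1\otimes p$ forces $p=\tau$, and the Haar identity $f\tau=\epsilon^0(f)\tau$ yields $K^0\cong pH^0=\BC\tau$, i.e., $K^0$ trivial. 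Like the paper, you correctly never need condition (ii) of the definition of innerness. What each approach buys: yours is more elementary and self-contained, using only the definition of freeness, finite-dimensional $C^*$-algebra structure theory, and the Haar projection identity, with no crossed products, no duality, and no appeal to Theorem \ref{thm:coaction8} (hence none of the multiplier-extension lemmas it rests on); the paper's detour is longer but establishes the stronger intermediate fact, of independent interest, that every quotient coaction of a free coaction is again free.
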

\begin{proof}Let $\pi^0$ be a surjective $C^*$-Hopf algebra homomorphism of $H^0$ onto $K^0$
and let $\sigma$ be the coaction of $K^0$ on $A$ induced by $\pi^0$, that is,
$\sigma=(\id_A \otimes\pi^0 )\circ\rho$. Let $K$ be the $C^*$-Hopf algebra induced by $K^0$ and let
$\pi$ be the $C^*$-Hopf algebra homomorphism induced by $\pi^0$, which is defined by
$$
\phi(\pi(k))=\pi^0 (\phi)(k)
$$
for any $\phi\in H^0$, $k\in K$. Then $\pi$ is injective since $\pi^0$ is surjective.
Also, since $\sigma=(\id_A \otimes\pi^0 )\circ\rho$, $A\rtimes_{\sigma}K\subset A\rtimes_{\rho}H$.
Furthermore, since $\overline{(A\rtimes_{\sigma}K)(A\rtimes_{\rho}H)}=A\rtimes_{\rho}H$,
$M(A\rtimes_{\sigma}K)\subset M(A\rtimes_{\rho}H)$. Since $\rho$ is free, by Theorem \ref{thm:coaction8},
$A' \cap M(A\rtimes_{\rho}H)=A' \cap M(A)$. Since
$A' \cap M(A)\subset A' \cap M(A\rtimes_{\sigma}K)\subset A' \cap M(A\rtimes_{\rho}H)$,
$$
A' \cap M(A\rtimes_{\sigma}H)=A' \cap M(A) .
$$
Thus $\sigma$ is free by Theorem \ref{thm:coaction8}. We suppose that $\sigma$ is inner. Then there is
a unitary element $u\in M(A\otimes K^0 )$ such that
$$
\sigma=\Ad(u)\circ\rho_{K^0}^A , \quad (u\otimes 1^0)(\underline{\rho_{K^0}^A} \otimes\id_{K^0})(u)
=(\id_{M(A)}\otimes\Delta_{K^0}^0 )(u) ,
$$
where $\Delta_{K^0}^0$ is the comultiplication of $K^0$.
Since $\sigma$ is free, $u\in (A' \cap M(A))\otimes\BC\tau_{K^0}$, where $\tau_{K^0}$ is the distinguished
projection in $K^0$.
Since $u$ is a unitary element, $\tau_{K^0}=1_{K^0}$. Hence
$K^0=\BC 1_{K^0}$, where 1$_{K^0}$ is the unit element
in $K^0$. Therefore, $K^0$ is trivial. Hence
$\rho$ is outer.
\end{proof}

We note that the Rokhlin property implies the outerness in the case that $A$ is a unital $C^*$-algebra.
Before we show it, we give the definitions of the approximate representability and the Rokhlin property.
For a unital $C^*$-algebra $A$, we set
$$
c_0 (A)=\{(a_n )\in l^{\infty}(\BN, A) \, | \, \lim_{n\to\infty}||a_n ||=0 \} , \quad
A^{\infty}=l^{\infty}(\BN, A)/ c_0 (A) .
$$
We denote an element in $A^{\infty}$ by the same symbol $(a_n )$ in $l^{\infty}(\BN, A)$.
We identify $A$ with the $C^*$-subalgebra of $A^{\infty}$ consisting of the equivalence
classes of constant sequences and set
$$
A_{\infty} =A^{\infty} \cap A' .
$$
For a coaction $\rho$ of $H^0$ on $A$, let $\rho^{\infty}$ be the coaction of $H^0$ on $A^{\infty}$
defined by
$$
\rho^{\infty}((a_n ))=(\rho(a_n ))
$$
for any $(a_n )\in A^{\infty}$.

\begin{Def}\label{def:coaction11} Let $\rho$ be a coaction of $H^0$ on a unital $C^*$-algebra $A$.
We say that $\rho$ is
\sl
approximately representable
\rm
if there is a unitary element $w\in A^{\infty}\otimes H^0$ satisfying the following conditions:
\newline
(1) $\rho(a)(\Ad(w)\circ\rho_{H^0}^A )(a)$ for any $a\in A$,
\newline
(2) $(w\otimes 1^0 )(\rho_{H^0}^{A^{\infty}}\otimes\id)(w)=(\id\otimes\Delta^0 )(w)$,
\newline
(3) $(\rho^{\infty}\otimes\id)(w)(w\otimes 1^0 )=(\id\otimes\Delta^0 )(w)$.
\end{Def}

\begin{Def}\label{def:coaction12} Let $\rho$ be a coaction of $H^0$ on a unital $C^*$-algebra
$A$. We say that $\rho$ has
\sl
the Rokhlin property
\rm
if the dual coaction $\widehat{\rho}$ of $H$ on $A\rtimes_{\rho}H$ is approximately representable.
\end{Def}

\begin{prop}\label{prop:coaction13}Let $\rho$ be a coaction of a non-trivial finite dimensional
$C^*$-Hopf algebra $H^0$ on a unital $C^*$-algebra $A$. If $\rho$ has the Rokhlin property, then
$\rho$ is outer.
\end{prop}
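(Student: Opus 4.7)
The plan is to reduce Proposition \ref{prop:coaction13} to Proposition \ref{prop:coaction10} by showing that the Rokhlin property of $\rho$ implies its freeness. Since $A$ is unital, Proposition \ref{prop:coaction5} characterizes the freeness of $\rho$ by the identity $A' \cap B = A' \cap A$, where $B = A\rtimes_\rho H$.

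By Definitions \ref{def:coaction12} and \ref{def:coaction11} applied to $\widehat\rho$, the Rokhlin hypothesis supplies a unitary $w\in B^\infty\otimes H$ with $\widehat\rho(b)=w(b\otimes 1_H)w^*$ for every $b\in B$, together with the cocycle-type relations (2) and (3) of Definition \ref{def:coaction11}. Taking $b=a\in A$ in this identity yields $a\otimes 1_H=w(a\otimes 1_H)w^*$, so $w$ commutes with $A\otimes 1_H$ in $B^\infty\otimes H$. Equivalently, the coefficients $\kappa(\phi):=(\id\otimes\phi)(w)\in B^\infty$, for $\phi\in H^0$, lie in $A'\cap B^\infty$, and by condition (2) the map $\kappa:H^0\to B^\infty$ is an algebra homomorphism.

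The crux is then to show that for every $x\in A'\cap B$ one has $\widehat\rho(x)=x\otimes 1_H$, which places $x$ in $B^{\widehat\rho}=A$. Since $\widehat\rho(x)=w(x\otimes 1_H)w^*$ by the approximate-representability identity, this reduces to showing that $w$ commutes with $x\otimes 1_H$. Expanding $x=\sum_{i,j,k}x_{ij}^k\rtimes_\rho w_{ij}^k$ and evaluating $\widehat\rho(x)$ via the coproducts $\Delta(w_{ij}^k)$, one uses equation $(*)$ of the proof of Proposition \ref{prop:coaction5}---which encodes the commutation of $x$ with $A$---together with the cocycle relations (2) and (3) of $w$ to decouple the tensor slots. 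Pairing both sides with the matrix-unit functionals $\phi_{ji}^k\in H^0$ should then isolate the individual coefficients $x_{ij}^k$ and force them to vanish whenever $w_{ij}^k\neq 1_H$, so that $x\in A$.

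Once the inclusion $A'\cap B\subseteq A$ is established, Proposition \ref{prop:coaction5} yields freeness of $\rho$, and Proposition \ref{prop:coaction10} then delivers the desired outerness. The main obstacle is the coefficient-extraction step above: decoupling the Rokhlin cocycle $w$ from the expansion of a general $x\in A'\cap B$ demands careful use of both cocycle identities (2) and (3), and constitutes the genuinely technical heart of the argument. Should this direct route stall, an alternative is to use Proposition \ref{prop:coaction5}(3) and show instead that any conditional expectation $B\to A$ must coincide with $E_1^\rho$, by transporting it equivariantly via the Rokhlin cocycle.
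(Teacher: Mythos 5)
You propose to prove the strictly stronger statement ``Rokhlin $\Rightarrow$ free'' and then invoke Proposition \ref{prop:coaction10}; this is not the paper's route, and the decisive step of your plan is missing rather than merely technical. You must show that $w$ commutes with $x\otimes 1_{H}$ for every $x\in A'\cap B$; but the only commutation you actually extract --- $w$ commutes with $A\otimes 1_{H}$, i.e.\ the coefficients $\kappa(\phi)$ lie in $A'\cap B^{\infty}$ --- says nothing about $x$, since an element of $A'\cap B$ need not commute with $A'\cap B^{\infty}$. Indeed $w(x\otimes 1)w^{*}=x\otimes 1$ is literally a reformulation of the desired conclusion $\widehat{\rho}(x)=x\otimes 1$, so the ``decoupling'' you defer is the whole problem, not a computation to be finished. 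Moreover equation $(*)$ alone can never force $x_{ij}^{k}=0$ for $w_{ij}^{k}\neq 1$: it encodes exactly the hypothesis $x\in A'\cap B$, which for a non-free coaction (e.g.\ the trivial one, where $A\rtimes_{\rho}H=A\otimes H$ and $A'\cap(A\otimes H)\supset\BC 1\otimes H$) is satisfied by many $x\notin A$. Hence the Rokhlin data must enter quantitatively, and your sketch --- cocycle identities (2), (3) plus pairing with matrix units --- specifies no mechanism by which it does. Note also that the paper nowhere claims, let alone proves, that the Rokhlin property implies freeness (the implication is true for finite group actions, but the Hopf version is left untouched), so your reduction carries a genuine extra burden.

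The paper's own proof avoids freeness entirely and verifies outerness directly from Definition \ref{coaction:9}(2): by \cite[Corollary 6.4]{KT2:coaction} the Rokhlin property yields a projection $p\in A_{\infty}$ with $e\cdot_{\rho^{\infty}}p=\frac{1}{N}$; given a surjective Hopf homomorphism $\pi^{0}:H^{0}\to K^{0}$ such that $\sigma=(\id\otimes\pi^{0})\circ\rho$ is inner via a unitary $u\in A\otimes K^{0}$, one computes $f\cdot_{\sigma^{\infty}}p$ (with $f$ the distinguished projection of $K$) in two ways: using $\pi(f)=e$ it equals $\frac{1}{N}$, while innerness gives $f\cdot_{\sigma^{\infty}}p=\widehat{u}(f_{(1)})\,p\,\widehat{u^{*}}(f_{(2)})=p$, the last equality holding because the coefficients of $u$ lie in $A$ and therefore commute \emph{exactly} with $p\in A^{\infty}\cap A'$. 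Thus the scalar $\frac{1}{N}$ is a projection, so $N=1$, contradicting non-triviality. This exposes the structural reason the paper's argument closes while yours stalls: the innerness unitary has entries in $A$, precisely the algebra $p$ commutes with, whereas your $w$ has entries only in $B^{\infty}$ and must be commuted past an element of $B$. To salvage your stronger route you would need finer Rokhlin machinery --- e.g.\ an identity of the type $p\,b\,p=E_{1}^{\rho}(b)\,p$ in $B^{\infty}$, the Hopf analogue of what makes the finite group case work --- and not the cocycle identities alone; your fallback via uniqueness of the conditional expectation meets the same obstruction, since by Watatani's Proposition 1.4.1 uniqueness again reduces to showing $A'\cap B=A'\cap A$.
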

\begin{proof} Let $\pi^0$, $\pi$ and $K^0$, $K$ be as in the proof of Proposition \ref{prop:coaction10}.
We regard $K$ as a $C^*$-Hopf subalgebra of $H$ by $\pi$. Also, let $\sigma=(\id\otimes\pi^0 )\circ\rho$,
a coaction of $K^0$ on $A$. Since $\rho$ has the Rokhin property, by \cite [Corollary 6.4]{KT2:coaction},
there is a projection $p\in A_{\infty}$ such that $e\cdot_{\rho^{\infty}}p=\frac{1}{N}$, where
$N=\dim H$. Let $f$ be the distinguished projection in $K$. Then
$$
f\cdot_{\sigma^{\infty}}p=(\id\otimes f)(\sigma^{\infty}(p))=(\id\otimes(f\circ\pi^0 ))(\rho^{\infty}(p))
=(\id\otimes e)(\rho^{\infty}(p))=e\cdot_{\rho^{\infty}}p=\frac{1}{N}
$$
since $\pi(f)=e$. We suppose that $\sigma$ is inner. Then there is a unitary element $u\in A\otimes K^0$
such that
$$\sigma=\Ad(u)\circ\rho_{K^0}^A , \quad
(u\otimes 1^0 )(\rho_{K^0}^A \otimes \id)(u)=(\id\otimes\Delta_{K^0}^0 )(u) ,
$$
where $\Delta_{K^0}^0$ is the comultiplication of $K^0$. Hence
$$
f\cdot_{\sigma^{\infty}}p=\widehat{u}(f_{(1)})[f_{(2)}\cdot_{\rho_{K^0}^A} p]\widehat{u^* }(f_{(3)})
=\widehat{u}(f_{(1)})p\widehat{u^* }(f_{(2)})=p
$$
since $p\in A_{\infty}$. Hence $\frac{1}{N}$ is a projection since $\frac{1}{N}=e$. Thus $N=1$.
This is a contradiction. Therefore, $\rho$ is outer.
\end{proof}

\section{Strong Morita equivalence}\label{sec:SM} Let $A\subset C$ and $B\subset D$ be unital
inclusions of unital $C^*$-algebras. We suppose that $A\subset C$ and $B\subset D$ are strongly
Morita equivalent with respect to a $C-D$-equivalence bimodule $Y$ and its closed subspace $X$.
The definition of strong Morita equivalence for inclusions of $C^*$-algebras is given in \cite {KT4:morita}.
Let $\CalE(A, C)$ be the set of all conditional expectations from $C$ onto $A$. Also, we define
$\CalE(B, D)$ as above. In this section,
we show that strong Morita equivalence for coactions
preserves the freeness of coactions of a finite dimensional $C^*$-Hopf algebra on unital $C^*$-algebras. 
Also, in the same way as above, we show that strong Morita equivalence for twisted
actions preserves the freeness of twisted actions of a countable discrete group on unital $C^*$-algebras.
By \cite [Propositions 2.4 and 5.2]{Kodaka:conditional}, we obtain the following proposition:

\begin{prop}\label{prop:SM5} Let $A\subset C$ and $B\subset D$ be unital inclusions of unital $C^*$-algebras.
We suppose that $A\subset C$ and $B\subset D$ are strongly Morita equivalent with respect to
a $C-D$-equivalence bimodule $Y$ and its closed subspace $X$. Then there is a bijective map from
$\CalE(A, C)$ onto $\CalE(B, D)$.
\end{prop}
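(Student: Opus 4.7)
The strategy is to build the bijection $\Phi:\CalE(A,C)\to\CalE(B,D)$ explicitly from the equivalence bimodule data, using the two propositions from \cite{Kodaka:conditional} as the workhorses. Given $E^A\in\CalE(A,C)$, I would invoke \cite[Proposition 2.4]{Kodaka:conditional} to produce the corresponding $E^B\in\CalE(B,D)$ through a formula that couples $E^A$ with the Hilbert bimodule inner products on the equivalence subbimodule $X\subset Y$. The natural prescription is
$$
{}_A\langle x\cdot E^B(d),\ y\rangle \;=\; E^A\bigl({}_C\langle x\cdot d,\ y\rangle\bigr), \qquad x,y\in X,\ d\in D,
$$
and, because ${}_A\langle X,X\rangle$ is dense in $A$ (by fullness of $X$ as an $A$--$B$-equivalence bimodule) and $X\cdot B$ is dense in $X$, this identity determines $E^B(d)$ unambiguously. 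One then verifies that $E^B$ is $B$-bimodular, positive, and of norm one.

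To obtain the inverse map $\Psi:\CalE(B,D)\to\CalE(A,C)$, I would apply the same construction to the dual equivalence bimodule $\widetilde{Y}\supset\widetilde{X}$, which furnishes a strong Morita equivalence from $B\subset D$ back to $A\subset C$. The key remaining task, which is the content of \cite[Proposition 5.2]{Kodaka:conditional}, is to verify that $\Phi$ and $\Psi$ are mutual inverses. This verification rests on the standard Hilbert bimodule identity ${}_A\langle x,y\rangle\cdot z = x\cdot\langle y,z\rangle_B$ for $x,y,z\in X$, which allows one to compute $\Psi(\Phi(E^A))(c)$ against inner products on $X$ and recover $E^A(c)$ by density.

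The step I expect to be the main technical obstacle is verifying the well-definedness and positivity of $E^B$: the implicit definition fixes only the $A$-valued inner products of $E^B(d)$ paired against elements of $X$, so one must carefully exploit the fullness of $X$ as a Hilbert $B$-module, together with the compatibility of the $A\subset C$ and $B\subset D$ inclusions with the pair $X\subset Y$, to ensure that $E^B(d)$ actually lies in $B\subset D$ rather than merely in $M(B)$, and that the resulting map is contractive. Once this is done, bijectivity follows at once from the duality argument sketched above.
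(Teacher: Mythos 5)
Your proposal is correct and follows essentially the same route as the paper, whose entire proof consists of citing \cite[Propositions 2.4 and 5.2]{Kodaka:conditional}: the first supplies the map $E^A\mapsto E^B$ via exactly the inner-product compatibility relation you write down (there packaged through an intermediate map $E^X\colon Y\to X$), and the second supplies the bijectivity via the dual bimodule $\widetilde{Y}\supset\widetilde{X}$. Your additional sketch of the construction is sound (note that since $A$ and $B$ are unital, $X$ is finitely generated projective, hence self-dual, which settles the existence of $E^B(d)\in B=M(B)$ that you flag as the main obstacle), so nothing is missing relative to the paper's argument.
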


By the above proposition, we obtain the following:

\begin{thm}\label{thm:SM6} $(1)$ Let $\rho$ and $\sigma$ be coactions of a finite dimensional $C^*$-Hopf
algebra $H^0$ on unital $C^*$-algebras $A$ and $B$, respectively. We suppose that $\rho$ and $\sigma$ are
strongly Morita equivalent. Then $\rho$ is free if and only if $\sigma$ is free.
\newline
$(2)$ Let $(\alpha, u)$ and $(\beta, v)$ be twisted actions of a countable discrete group $G$ on unital
$C^*$-algebras $A$ and $B$, respectively. We suppose that $(\alpha, u)$ and $(\beta, v)$ are strongly
Morita equivalent. Then $(\alpha, u)$ is free if and only if $(\beta, v)$ is free.
\end{thm}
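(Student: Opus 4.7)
The plan is to reduce the freeness of a coaction (respectively, of a twisted action) on a unital $C^*$-algebra to the statement that the canonical conditional expectation onto the fixed base is the unique conditional expectation, and then to transport this cardinality statement across strong Morita equivalence via Proposition \ref{prop:SM5}. The underlying principle is simple: if $A\subset C$ and $B\subset D$ are strongly Morita equivalent unital inclusions, then $\CalE(A,C)$ and $\CalE(B,D)$ are in bijection, so $|\CalE(A,C)|=1$ if and only if $|\CalE(B,D)|=1$.

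For part (1), I would first invoke the construction developed in the authors' earlier work on strong Morita equivalence for coactions (e.g.\ \cite{KT4:morita}): from a strong Morita equivalence between $\rho$ and $\sigma$ one obtains a strong Morita equivalence between the unital inclusions $A\subset A\rtimes_\rho H$ and $B\subset B\rtimes_\sigma H$, realized by the crossed product of the equivalence bimodule by the equivalence coaction. Proposition \ref{prop:SM5} then supplies a bijection between $\CalE(A, A\rtimes_\rho H)$ and $\CalE(B, B\rtimes_\sigma H)$. By Proposition \ref{prop:coaction5}, freeness of $\rho$ is equivalent to the assertion that $\CalE(A, A\rtimes_\rho H) = \{E_1^\rho\}$, and similarly for $\sigma$. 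The bijection of conditional expectations therefore forces $\rho$ to be free exactly when $\sigma$ is free.

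Part (2) is handled in complete parallel: one uses the twisted-action analog of Proposition \ref{prop:coaction5}, namely that $(\alpha, u)$ is free if and only if the canonical conditional expectation from the twisted crossed product onto $A$ is unique, which is essentially \cite[Lemma 3.1.1 and Theorem 3.1.2]{Zarikian:expectation}. Combined with the fact that strong Morita equivalence of twisted actions produces a strong Morita equivalence between the inclusions $A\subset A\rtimes_{\alpha,u}G$ and $B\subset B\rtimes_{\beta,v}G$, the same counting argument applies. The main obstacle will be verifying cleanly that the given Morita equivalence of coactions (respectively twisted actions) does lift to a strong Morita equivalence \emph{of inclusions} in the sense required by Proposition \ref{prop:SM5}, by exhibiting the appropriate closed $C^*$-subspace of the crossed-product equivalence bimodule and checking the compatibility axioms. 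Everything beyond that is essentially tautological, since the freeness characterizations convert both parts of the theorem into pure statements about cardinalities of sets of conditional expectations.
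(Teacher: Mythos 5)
Your proposal matches the paper's proof essentially verbatim: both reduce freeness to uniqueness of the canonical conditional expectation (Proposition \ref{prop:coaction5} for coactions), lift the Morita equivalence of coactions (respectively twisted actions) to a strong Morita equivalence of the inclusions $A\subset A\rtimes_{\rho}H$ and $B\subset B\rtimes_{\sigma}H$ (respectively of the reduced twisted crossed product inclusions), and transport uniqueness through the bijection of Proposition \ref{prop:SM5}. The only cosmetic differences are that the paper disposes of your ``main obstacle'' by direct citation (\cite[Examples]{KT4:morita} for coactions, \cite[Proposition 2.1]{Kodaka:countable} for twisted actions, the latter explicitly using \emph{reduced} crossed products) and cites \cite[Proposition 4.1]{Kodaka:countable} rather than Zarikian directly for the twisted-action freeness characterization.
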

\begin{proof} (1) We note that the unital inclusions $A\subset A\rtimes_{\rho}H$ and
$B\subset B\rtimes_{\sigma}H$ are strongly Morita equivalent by \cite [Examples]{KT4:morita}
since $\rho$ and $\sigma$ are strongly Morita equivalent. Since $\rho$ is free, $E_1^{\rho}$ is the 
unique conditional expectation from $A\rtimes_{\rho}H$ onto $A$ by Proposition \ref{prop:coaction5}.
Hence by Proposition \ref{prop:SM5}, $E_1^{\sigma}$ is the unique conditional expectation from
$B\rtimes_{\sigma}H$ onto $B$. Thus $\sigma$ is free by Proposition \ref{prop:coaction5}.
\newline
(2) Let $E_1^{\alpha, u}$ and $E_1^{\beta, v}$ be the canonical conditional expectations from
$A\rtimes_{\alpha, u, r}G$ and $B\rtimes_{\beta, v, r}G$ onto $A$ and $B$, respectively, where
$A\rtimes_{\alpha, u, r}G$ and $B\rtimes_{\beta, v, r}G$ are the reduced twisted crossed products of
$A$ and $B$ by $(\alpha, u)$ and $(\beta, v)$, respectively. Then by \cite [Proposition 2.1]{Kodaka:countable},
the inclusions $A\rtimes_{\alpha, u, r}G$ and $B\rtimes_{\beta, v, r}G$ are strongly Morita equivalent.
Since $(\alpha, u)$ is free, $E_1^{\alpha, u}$ is the unique conditional expectation from
$A\rtimes_{\alpha, u, r}G$ onto $A$ by \cite [Proposition 4.1]{Kodaka:countable}. Hence by Proposition
\ref{prop:SM5}, $E_1^{\beta, v}$ is the unique conditional expectation from $B\rtimes_{\beta, v, r}G$ onto
$B$. Thus $(\beta, v)$ is free by \cite [Proposition 4.1]{Kodaka:countable}.
\end{proof}

Theorem \ref{thm:SM6} shows that strong Morita equivalence for coactions of a finite dimensional
$C^*$-Hopf algebra on unital $C^*$-algebras preserves the freeness of coactions. But strong
Morita equivalence for coactions dos not preserve the saturatedness. We give such an example.
Let $p$ and $q$ be projections in a unital $C^*$-algebra $A$. If $p$ is Murray-von Neumann equivalent
to $q$ in $A$, we denote it by $p\sim q$ in $A$.

\begin{exam}\label{exam:SM7}
\rm
We give coactions $\rho$ and $\sigma$ of $H^0$ on a unital $C^*$-algebras
$A$ and $B$, respectively, which are strongly Morita equivalent and have the following properties,
where $H^0$ is the dual $C^*$-Hopf algebra of a finite dimensional $C^*$-Hopf algebra $H$.
\newline
(1) $\rho$ is not saturated,
\newline
(2) $\sigma$ is saturated,
\newline
(3) $\rho$ and $\sigma$ are strongly Morita equivalent.
\newline
Let $\rho$ be a coaction of $H^0$ on $A$, which is not saturated. Let $\sigma$ be the second dual coaction
$\rho$ of $H^0$ on $A\rtimes_{\rho}\rtimes_{\widehat{\rho}}H^0$. Let
$B=A\rtimes_{\rho}\rtimes_{\widehat{\rho}}H^0$. Then $\sigma$ is a coaction of $H^0$ on $B$ and by
\cite [Proposition 3.19]{KT1:inclusion},
$\widehat{\sigma}(1\rtimes_{\sigma}e)\sim(1\rtimes_{\sigma}e)\otimes 1$ in $(B\rtimes_{\sigma}H)\otimes H$.
Hence by \cite [Proposition 6.4]{KT1:inclusion}, $\sigma$ is saturated. Also, by \cite [Theorem 3.3]
{KT2:coaction}, there is an isomorphism $\Psi$ of $M_N (A)$ onto $B$ such that $\sigma$ is exterior
equivalent to $(\Psi\otimes\id)\circ(\rho\otimes\id)\circ\Psi^{-1}$. Hence by \cite [Lemmas 3.11 and 3.16]
{KT3:equivalence}, $\rho$ and $\sigma$ are strongly Morita equivalent.
\end{exam}

\section{Simplicity and freeness}\label{sec:SF} In this section, we discuss simplicity of $C^*$-algebras and
freeness of coactions of finite dimensional $C^*$-Hopf algebras.

\begin{lemma}\label{lem:SF1} Let $A$ be a simple $C^*$-algebra and let $\rho$ be a free coaction
of a finite dimensional $C^*$-Hopf algebra $H^0$ on $A$. Then $A\rtimes_{\rho}H$ is simple.
\end{lemma}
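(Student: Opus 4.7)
The plan is to verify that the inclusion $A\subset A\rtimes_{\rho}H$ is irreducible and of Watatani index-finite type, and then to invoke Watatani's simplicity criterion \cite[Proposition 2.7.3]{Watatani:index}, which the authors have already invoked in Remark \ref{rem:coaction8-2}. I will focus on the unital case for clarity; the non-unital case is handled by passing to the multiplier algebra.

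For the index-finite type, $A\subset A\rtimes_{\rho}H$ is of Watatani index-finite type with Watatani index equal to $N=\dim H$, via the standard quasi-basis for $E_1^{\rho}$ built from the (co)matrix units $\{w_{ij}^k\}$ of $H$. For the irreducibility, since $\rho$ is free, Theorem \ref{thm:coaction8} yields
\[
A'\cap M(A\rtimes_{\rho}H)=A'\cap M(A);
\]
simplicity of $A$ forces $Z(M(A))=\BC 1$, so $A'\cap M(A)=\BC 1$, and in the unital case this gives $A'\cap(A\rtimes_{\rho}H)=\BC 1$.

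With both conditions in hand, \cite[Proposition 2.7.3]{Watatani:index} applied to the simple base algebra $A$ yields that $A\rtimes_{\rho}H$ is simple. The main obstacle I anticipate is the non-unital case, where verifying $Z(M(A))=\BC 1$ for simple non-unital $A$ (a standard but nontrivial fact) and carrying Watatani's simplicity result through the multiplier algebra require some extra bookkeeping; in the unital case the three-step argument above is direct. An alternative route would be to use Proposition \ref{prop:coaction5}(3) and faithfulness of $E_1^{\rho}$: if $J$ is a proper nonzero ideal with $J\cap A=0$, construct a conditional expectation on $(A\rtimes_{\rho}H)/J$ from the finite-index structure, pull it back to a second conditional expectation $A\rtimes_{\rho}H\to A$, and force $J=0$ by uniqueness plus faithfulness; the delicate step there is the descent of the quasi-basis through the quotient, which the Watatani route avoids.
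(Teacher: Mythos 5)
Your first half coincides with the paper's proof: freeness gives $A'\cap M(A\rtimes_{\rho}H)=A'\cap M(A)$ by Theorem \ref{thm:coaction8}, and simplicity of $A$ gives $A'\cap M(A)=\BC 1$ by Pedersen \cite[Corollary 4.4.8]{Pedersen:auto}. Note that this last citation already covers non-unital simple $A$, and Theorem \ref{thm:coaction8} is formulated with multiplier algebras from the start, so the ``extra bookkeeping'' you anticipate for the non-unital case is exactly how the paper runs the whole argument; there is no unital/non-unital split to patch.

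The genuine gap is the decisive step: \cite[Proposition 2.7.3]{Watatani:index} is not a simplicity criterion, and in particular the authors did not invoke it as one in Remark \ref{rem:coaction8-2}. As this paper uses it (Remark \ref{rem:coaction8-2} and Lemma \ref{lem:SF2}), the content of (the proof of) that proposition is a relative-commutant statement about the basic construction --- irreducibility $A'\cap B=\BC 1$ propagates to $B'\cap B_1=\BC 1$, resp.\ $\dim(A'\cap B)=\dim(B'\cap B_1)$ --- not the assertion that an irreducible index-finite extension of a simple algebra is simple. The structural input you actually need is Izumi's theorem \cite[Theorem 3.3]{Izumi:simple}: since $A$ is simple and $E_1^{\rho}$ is of index-finite type, $A\rtimes_{\rho}H=\oplus_{i=1}^n I_i$ with each $I_i$ simple. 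That is what the paper invokes, and then the commutant computation you already performed finishes the proof: the direct sum yields $(A\rtimes_{\rho}H)'\cap M(A\rtimes_{\rho}H)\supset\oplus_{i=1}^n (I_i'\cap M(I_i))\cong\BC^n$ (Pedersen again), while this commutant sits inside $A'\cap M(A\rtimes_{\rho}H)=A'\cap M(A)=\BC 1$, forcing $n=1$. So once the misattributed citation is replaced by Izumi's decomposition, your argument is the paper's proof rather than an alternative to it. (Your sketched second route via uniqueness of $E_1^{\rho}$ does founder where you suspect: $E_1^{\rho}$ need not map an ideal $J$ into itself, so no expectation descends to the quotient without substantial further work.)
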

\begin{proof} We assume that $A\rtimes_{\rho}H$ is not simple. Then by Izumi \cite [Theorem 3.3]{Izumi:simple},
$A\rtimes_{\rho}H$ is a finite direct sum of simple closed two-sided ideals, that is,
$A\rtimes_{\rho}H=\oplus_{i=1}^n I_i$, where $I_i$ is a simple closed two-sided ideal of $A\rtimes_{\rho}H$
for $i=1,2,\dots, n$ and $n\geq 2$. Thus,
\begin{align*}
(A\rtimes_{\rho}H)' \cap M(A\rtimes_{\rho}H) & =(\oplus_{i=1}^n I_i )' \cap M(\oplus_{i=1}^n I_i )
\cong (\oplus_{i=1}^n I_i )' \cap(\oplus_{i=1}^n M(I_i )) \\
& \supset \oplus_{i=1}^n (I_i ' \cap M(I_i ))
\cong \underbrace{\BC1\oplus \cdots \oplus \BC1}_{n-\text{times}}
\end{align*}
by Pedersen \cite [Corollary 4.4.8]{Pedersen:auto}. On the other hand, since $\rho$ is free,
by Theorem \ref{thm:coaction8} and Pedersen \cite [Corollary 4.4.8]{Pedersen:auto},
$$
\BC1=A' \cap M(A)=A' \cap M(A\rtimes_{\rho}H)\supset (A\rtimes_{\rho}H)' \cap M(A\rtimes_{\rho}H) .
$$
This is a contradiction since $n\geq 2$. Therefore, we obtain the conclusion.
\end{proof}

\begin{lemma}\label{lem:SF2} Let $A$ be a simple unital $C^*$-algebra and let $\rho$ be a free coaction
of $H^0$ on $A$. Let $\widehat{\rho}$ be its dual coaction of $H$ on $A\rtimes_{\rho}H$. Then $\widehat{\rho}$ is
free.
\end{lemma}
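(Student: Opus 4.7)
The plan is to verify freeness of $\widehat{\rho}$ directly from Definition \ref{def:coaction1}, adapted to a coaction of $H$ (rather than $H^0$). Since $A\rtimes_{\rho}H$ is unital, its multiplier algebra coincides with itself, and the distinguished projection in $H$ playing the role of $\tau$ is the Haar integral $e$. So one takes an arbitrary $x\in(A\rtimes_{\rho}H)\otimes H$ satisfying $x\widehat{\rho}(y)=(y\otimes 1)x$ for all $y\in A\rtimes_{\rho}H$, and must conclude $x\in((A\rtimes_{\rho}H)' \cap(A\rtimes_{\rho}H))\otimes\BC e$.

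First I would specialize $y$ to elements $a\rtimes_{\rho}1\in A$. Since $\widehat{\rho}(a\rtimes_{\rho}1)=(a\rtimes_{\rho}1)\otimes 1$, the defining relation collapses to $x(a\otimes 1)=(a\otimes 1)x$ for all $a\in A$, so $x\in(A' \cap(A\rtimes_{\rho}H))\otimes H$. Freeness of $\rho$ together with Proposition \ref{prop:coaction5}(2) gives $A' \cap(A\rtimes_{\rho}H)=A' \cap A$, which equals $\BC 1$ by simplicity of $A$. Hence $x=1\otimes h$ for some $h\in H$.

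Next I would specialize $y$ to $1\rtimes_{\rho}k$ for $k\in H$. Using $\widehat{\rho}(1\rtimes_{\rho}k)=(1\rtimes_{\rho}k_{(1)})\otimes k_{(2)}$, the relation $x\widehat{\rho}(y)=(y\otimes 1)x$ becomes
\[
(1\rtimes_{\rho}k_{(1)})\otimes hk_{(2)}=(1\rtimes_{\rho}k)\otimes h .
\]
Since the map $k\mapsto 1\rtimes_{\rho}k$ is an injective linear embedding of $H$ into $A\rtimes_{\rho}H$ (as the underlying vector space is $A\otimes H$), this reduces to $k_{(1)}\otimes hk_{(2)}=k\otimes h$ in $H\otimes H$ for every $k\in H$. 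Applying $\epsilon\otimes\id$ and using the counit identity $\epsilon(k_{(1)})k_{(2)}=k$ yields $hk=\epsilon(k)h$ for all $k\in H$, so $h$ is a right integral in $H$.

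The final step is to identify $h$ with a scalar multiple of $e$: the space of right integrals in a finite-dimensional Hopf algebra is one-dimensional, and since $H$ is a $C^*$-Hopf (Kac) algebra it is unimodular, so the Haar integral $e$ is a two-sided integral and in particular a nonzero right integral, hence spans that space. Therefore $h\in\BC e$, giving $x\in\BC(1\otimes e)\subset((A\rtimes_{\rho}H)' \cap(A\rtimes_{\rho}H))\otimes\BC e$, as required. The main subtlety---more bookkeeping than genuine obstacle---is keeping track of which ``distinguished projection'' appears upon dualizing the coaction (it is $e\in H$, not $\tau\in H^0$) and confirming that $e$ spans the right integrals of $H$; once that is clear, the argument is a direct unwinding of the definition combined with the freeness of $\rho$ and the simplicity of $A$.
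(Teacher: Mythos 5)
Your proof is correct, but it takes a genuinely different route from the paper's. You verify the definition of freeness for $\widehat{\rho}$ directly: specializing the intertwining relation to $y=a\rtimes_{\rho}1$ reduces $x$ to $(A'\cap(A\rtimes_{\rho}H))\otimes H=\BC 1\otimes H$ (by freeness of $\rho$ via Proposition \ref{prop:coaction5}(2) and triviality of the center of the simple unital algebra $A$), and specializing to $y=1\rtimes_{\rho}k$ then forces the remaining leg $h\in H$ to be a right integral, hence a multiple of $e$. The paper instead routes through the relative-commutant criterion of Theorem \ref{thm:coaction8}: it first invokes Lemma \ref{lem:SF1} (which rests on Izumi's simplicity theorem) to get $(A\rtimes_{\rho}H)'\cap(A\rtimes_{\rho}H)=\BC 1$, then uses Watatani's index-theoretic dimension equality $\dim(A'\cap(A\rtimes_{\rho}H))=\dim((A\rtimes_{\rho}H)'\cap(A\rtimes_{\rho}H\rtimes_{\widehat{\rho}}H^0))$ for the basic construction to conclude $(A\rtimes_{\rho}H)'\cap(A\rtimes_{\rho}H\rtimes_{\widehat{\rho}}H^0)=\BC 1$, and applies Theorem \ref{thm:coaction8} to $\widehat{\rho}$. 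Your argument is more elementary and self-contained: it bypasses Izumi's theorem, the simplicity of the crossed product, and the index machinery entirely, and it makes visible that the only input really needed is $A'\cap(A\rtimes_{\rho}H)=\BC 1$ (freeness plus trivial center), whereas the paper's route yields the stronger intermediate fact about the second relative commutant. Two small remarks: your appeal to Larson--Sweedler uniqueness of integrals can be avoided, since $h$ a right integral and $e$ a two-sided integral give $h=he=\epsilon(h)e$ directly; and your reduction in the first step implicitly uses that $H$ is finite dimensional to write $x=\sum_i x_i\otimes h_i$ over a basis of $H$ and compare coefficients, which is worth stating but is unproblematic here.
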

\begin{proof} Since $\rho$ is free, by Lemma \ref{lem:SF1}, $A\rtimes_{\rho}H$ is simple
unital $C^*$-algebra. Hence $(A\rtimes_{\rho}H)' \cap (A\rtimes_{\rho}H)=\BC 1$
by Pedersen \cite [Corollary 4.4.8]{Pedersen:auto}. Also, by Watatani \cite [Lemma 2.10.6]{Watatani:index}
and the proof of \cite [Proposition 2.7.3]{Watatani:index},
$$
\dim (A' \cap (A\rtimes_{\rho}H))=\dim ((A\rtimes_{\rho}H)' \cap(A\rtimes_{\rho}\rtimes_{\widehat{\rho}}H^0 )) .
$$
Since $A' \cap (A\rtimes_{\rho}H)=A' \cap A=\BC1$ by Theorem \ref{thm:coaction8}, we can see that
$$
(A\rtimes_{\rho}H)' \cap(A\rtimes_{\rho}\rtimes_{\widehat{\rho}}H^0 )=\BC1 .
$$
Thus by Theorem \ref{thm:coaction8}, $\widehat{\rho}$ is free.
\end{proof}

\begin{prop}\label{prop:SF3} Let $A$ be a simple unital $C^*$-algebra. Let $\rho$ be a coaction
of $H^0$ on $A$ and $\widehat{\rho}$ its dual coaction of $H$ on $A\rtimes_{\rho}H$. Then
$\rho$ is free if and only if $\widehat{\rho}$ is free.
\end{prop}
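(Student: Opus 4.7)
The forward direction $\rho$ free $\Rightarrow$ $\widehat{\rho}$ free is precisely Lemma \ref{lem:SF2}, so I only need to plan the converse. My strategy is to run the argument of Lemma \ref{lem:SF2} in reverse, with $\widehat{\rho}$ playing the role of $\rho$.

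Suppose $\widehat{\rho}$ is free. By Takesaki--Takai duality in the form of \cite[Theorem 3.3]{KT2:coaction}, the iterated crossed product $A\rtimes_{\rho}H\rtimes_{\widehat{\rho}}H^0$ is isomorphic to $M_N(A)$, which is simple since $A$ is, so by Pedersen \cite[Corollary 4.4.8]{Pedersen:auto} one has $(A\rtimes_{\rho}H\rtimes_{\widehat{\rho}}H^0)'\cap(A\rtimes_{\rho}H\rtimes_{\widehat{\rho}}H^0)=\BC 1$. Next, the dimension formula of Watatani (\cite[Lemma 2.10.6]{Watatani:index} together with the proof of \cite[Proposition 2.7.3]{Watatani:index}), applied to the inclusion $A\subset A\rtimes_{\rho}H$ whose basic extension is $A\rtimes_{\rho}H\rtimes_{\widehat{\rho}}H^0$, gives
$$
\dim(A'\cap(A\rtimes_{\rho}H))=\dim((A\rtimes_{\rho}H)'\cap(A\rtimes_{\rho}H\rtimes_{\widehat{\rho}}H^0)).
$$

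Since $\widehat{\rho}$ is free, Theorem \ref{thm:coaction8} lets me replace the right-hand side with $\dim((A\rtimes_{\rho}H)'\cap(A\rtimes_{\rho}H))$. The goal is then to conclude that this common value equals $1$, so that $A'\cap(A\rtimes_{\rho}H)=\BC 1=A'\cap A$, whence $\rho$ is free by Theorem \ref{thm:coaction8}.

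The step I expect to be the main obstacle is showing that $(A\rtimes_{\rho}H)'\cap(A\rtimes_{\rho}H)=\BC 1$, that is, that $A\rtimes_{\rho}H$ has trivial centre, starting only from the freeness of $\widehat{\rho}$ together with the simplicity of $A$. The natural route is to invoke Izumi's decomposition \cite[Theorem 3.3]{Izumi:simple} of $A\rtimes_{\rho}H$ as a finite direct sum $\oplus_{i=1}^n I_i$ of simple closed two-sided ideals and then analyse how $\widehat{\rho}$ acts on the central projections $e_i$; the crux should be that freeness of $\widehat{\rho}$ forces $n=1$, after which the dimension chain above closes and Theorem \ref{thm:coaction8} completes the argument.
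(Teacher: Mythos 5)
Your forward direction is exactly the paper's (it is Lemma \ref{lem:SF2}), and your reduction of the converse is set up correctly as far as it goes: writing $B=A\rtimes_{\rho}H$, freeness of $\widehat{\rho}$ together with the Watatani dimension formula does give
\[
\dim(A'\cap B)=\dim\bigl(B'\cap(B\rtimes_{\widehat{\rho}}H^0)\bigr)=\dim\bigl(B'\cap B\bigr).
\]
But the step you yourself flag as ``the main obstacle'' is a genuine gap, and it cannot be closed: freeness of $\widehat{\rho}$ plus simplicity of $A$ does \emph{not} force $n=1$ in Izumi's decomposition. First, note your chain is self-consistent for every $n$: with $B=\oplus_{i=1}^n I_i$ it yields $\dim(A'\cap B)=\dim(B'\cap B)=n$, i.e. $A'\cap B$ equals the centre of $B$, and no contradiction with $n\geq 2$ comes out of counting alone. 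Second, the crux claim is actually false. Take $A=\BC$ (simple, unital), $H=C(G)$ for a nontrivial finite group $G$, and $\rho$ the trivial coaction of $H^0=\BC[G]$ on $A$. Then $B\cong C(G)$ and $\widehat{\rho}$ is the comultiplication $\Delta$. If $x\in C(G)\otimes C(G)$ satisfies $x\Delta(b)=(b\otimes 1)x$ for all $b$, then $x(s,t)b(st)=b(s)x(s,t)$ for all $s,t\in G$ and all $b$, which forces $x=x'\otimes\delta_{\imath}$ with $x'\in C(G)=B'\cap B$, and $\delta_{\imath}$ is exactly the distinguished projection $e\in H$; so $\widehat{\rho}$ is free. (Equivalently: the induced action of $H^0$ on $B$ is translation, $B\rtimes_{\widehat{\rho}}H^0\cong M_N(\BC)$, and $C(G)$ is a masa therein, so condition (2) of Proposition \ref{prop:coaction5} holds.) Yet $B'\cap B=C(G)$ has dimension $N\geq 2$, and $\rho$ itself is not free, since $E_1^{\rho}$ is just one of many states on $C(G)$, violating condition (3) of Proposition \ref{prop:coaction5}.

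For comparison, the paper's converse runs differently: it asserts that $A\rtimes_{\rho}H$ is simple by Lemma \ref{lem:SF1}, applies Lemma \ref{lem:SF2} to $\widehat{\rho}$ to conclude that $\widehat{\widehat{\rho}}$ is free, and finishes with duality (\cite[Theorem 3.3]{KT2:coaction}) and Theorem \ref{thm:SM6}. But observe that Lemma \ref{lem:SF1} has the freeness of $\rho$ among its hypotheses --- precisely the conclusion being proved --- so the paper's argument needs the simplicity of $A\rtimes_{\rho}H$ at exactly the point where your plan needs its trivial centre, and the example above (where $B=C(G)$ is not simple) shows this obstruction is real rather than technical. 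In short: your instinct that everything hinges on $B$ having trivial centre is correct, your forward direction and dimension-formula bookkeeping match the paper, but the missing step is not a lemma awaiting a proof --- as stated it fails, so neither your route nor a straightforward reading of the paper's closes the converse.
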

\begin{proof} We suppose that $\rho$ is free. Then by Lemma \ref{lem:SF2}, $\widehat{\rho}$ is free.
We suppose that $\widehat{\rho}$ is free. Then since $A\rtimes_{\rho}H$ is a simple unital $C^*$-algebra
by Lemma \ref{lem:SF1}, $\widehat{\widehat{\rho}}$ is free by Lemma \ref{lem:SF2}. By
\cite [Theorem 3.3]{KT2:coaction}, $\widehat{\widehat{\rho}}$ and $\rho$ are strongly Morita equivalent.
Hence by Theorem \ref{thm:SM6}, $\rho$ is a free coaction of $H^0$ on $A$.
\end{proof}

\end{document}